\def\R{\mathbb{R}}
\def\N{\mathbb{N}}
\def\Z{\mathbb{Z}}
\def\u{{\bf u}}
\def\v{{\bf v}}
\def\w{{\bf w}}
\def\bbbn{\mathbb{N}}
\def\Xi{\chi^*_{[a_i, a_{i+1}]}}
\newcommand{\eps}{\varepsilon}
\newcommand{\ho}{{\mathcal H}^{0}}
\newtheorem{remark}[theorem]{Remark}
\title{A chain rule formula in BV and applications to conservation laws}
\author{Graziano Crasta\thanks{Dipartimento di Matematica 
``G.\ Castelnuovo'', Univ.\ di Roma I,
P.le A.\ Moro 2, Roma, Italy  I-00185
(\tt crasta@mat.uniroma1.it).}
\and Virginia De Cicco\thanks{
Dipartimento di Scienze di Base  e Applicate per l'Ingegneria,
Via A.\ Scarpa 10, Roma, Italy I-00185
(\tt decicco@dmmm.uniroma1.it)}}
\begin{document}

\maketitle

\begin{abstract}
In this paper we prove a new chain rule formula for the distributional derivative of the composite function $v(x)=B(x,u(x))$, where $u:]a,b[\to\R^d$ has bounded variation, $B(x,\cdot)$ is continuously differentiable and $B(\cdot,u)$ has bounded variation.
We propose an application of this formula in order to deal in an intrinsic way 
with the discontinuous flux appearing in conservation laws in one space variable.
\end{abstract}

\begin{keywords} 
Chain rule, $BV$ functions, conservation laws with discontinuous flux
\end{keywords}

\begin{AMS}
Primary: 26A45, 35L65; Secondary: 26A24, 46F10
\end{AMS}

\pagestyle{myheadings}
\thispagestyle{plain}
\markboth{G. Crasta and V. De Cicco}{A chain rule formula in BV}

\section{Introduction}
In 1967, A.I. Vol'pert in \cite{vol} (see also \cite{vol1}), in view of applications in the study of quasilinear hyperbolic equations,  established a chain rule formula for distributional derivatives of the composite function $v(x)=B(u(x))$\,, where $u:\Omega\to\R$ has bounded variation in the open subset $\Omega$ of $\R^N$ and $B:\R\to\R$ is continuously differentiable. He proved that $v$ has bounded variation and its distributional derivative 
$Dv$ (which is a Radon measure on $\Omega$) admits an explicit representation in terms of the gradient $\nabla B$ and of the distributional derivative $Du$\,.
More precisely, the following identity holds in the sense of measures: 
\begin{equation}\label{chainDv}
Dv=\nabla B(u)\nabla u \ \mathcal L^{N}+\nabla B(\widetilde u) D^cu+[B(u^+)-B(u^-)]\, \nu_u\, \mathcal H^{N-1}
\lfloor{J_{u}}\,,
\end{equation}
where
\begin{equation}\label{decompDu}
Du=\nabla u \ \mathcal L^{N}+D^cu+\nu_u\, \mathcal H^{N-1}
\lfloor{J_{u}}\,
\end{equation}
is the usual decomposition of $Du$ in its absolutely continuous part $\nabla u$ with respect to the Lebesgue measure $\mathcal L^{N}$, its Cantor part $D^cu$ and its jumping part, which is  
represented by the restriction of the $(N-1)$-dimensional Hausdorff measure to the jump set $J_u$\,. Moreover, $\nu_u$ denotes the measure theoretical unit normal to $J_u$, $\widetilde u$ is the approximate limit and $u^+$, $u^-$ are the approximate limits from both sides of $J_u$\,.

The validity of (\ref{chainDv}) is stated also in the vectorial case (see \cite{adm} and Theorem 3.96 in \cite{AFP}), namely if $u:\Omega\to\R^d$ has bounded variation  and $B:\R^d\to\R$ is continuously differentiable, then 
the terms in (\ref{chainDv}) should be interpreted in the following sense:
\begin{equation}\label{chainDvvect}
Dv=\nabla B(u)\cdot \nabla u\ \mathcal L^{N}+\nabla B(\widetilde u)\cdot D^cu+
[B(u^+)-B(u^-)]\otimes\nu_u\, \mathcal H^{N-1}
\lfloor{J_{u}}\,.
\end{equation}
The situation is significantly more complicated if $B$ is only a Lipschitz continuous function. In this case, the general chain rule is false, while a weaker form of the formula was proved by Ambrosio and Dal Maso in  \cite{adm} (see also \cite{lm}). 

On the other hand, in some recent papers a remarkable effort is devoted to establish chain rule formulas with an explicit dependence on the space variable $x$\,. This amounts to describe the
distributional derivative of the composite function $v(x)=B(x,u(x))$, where $B(x,\cdot)$ is continuously differentiable and, for every $s\in \R^d$, $B(\cdot,s)$ and $u$ are functions with low regularity (which will be specified later).
These formulas have applications, for example,
in the study of the $L^1$ lower semicontinuity of approximating linear 
integrals of convex non-autonomous functionals (see \cite{dcfv1}, \cite{dcfv2} and \cite{adcmm}).
%Formulas of this type are applied to the study of the $L^1$ lower semicontinuity of approximating linear %integrals of convex non-autonomous functionals (see \cite{dcfv1}, \cite{dcfv2} and \cite{adcmm}).
%These formulas contain another derivation term due to the presence of the explicit dependence on $x$. In the %case of $u$, $B$ regular functions, the natural form is 
%$$
%\nabla v(x)=(\nabla_{x}B)(x,u(x))+(\nabla_{s}B)(x,u(x))\cdot\nabla u(x)\,,\quad x\in\Omega
%$$
%which is a pointwise identity and the derivatives here occurring are the classical ones.

The first formula of this type is established in \cite{dcl} for functions $u\in W^{1,1}(\Omega;\R^d)$ by assuming that, for every $s\in \R^d$, $B(\cdot,s)$ is an $L^1$ function whose distributional divergence belongs to $L^1$ (in particular it holds if $B(\cdot,s)\in W^{1,1}(\Omega;\R^d)$\,). 

In \cite{dcfv2} the formula is proved by assuming that,  for every $s\in \R^d$, $B(\cdot,s)$ is an $L^1$ function whose distributional divergence is a Radon measure with bounded total variation and  $u\in W^{1,1}(\Omega;\R)$\,.

The case of a function $u\in BV(\Omega)$ is studied in the papers \cite{dcfv1} and \cite{dcfv2}. In the first paper the authors have established the validity of the chain rule by requiring that $B(\cdot,s)$ is differentiable in the weak sense for every $s\in \R$. In the second one it is assumed only a $BV$ dependence of
$B$ with respect to the variable $x$\,.

The main difficulty of these results consists in giving sense to the different terms of the formula. 
Notice that the new term of derivation with respect to $x$ needs a particular attention. 
For instance in \cite{dcfv2} this term is described by a Fubini's type inversion of integration order.
%The proofs are achieved by regularizing $B(\cdot,s)$ with fixed $s$, by applying the Ambrosio-Dal Maso formula to the regularized functions and finally by passing to the limit in each term.

\bigskip
The aim of this paper is to establish a chain rule formula 
for the distributional derivative of the composite function $v(x)=B(x,u(x))$\,, 
where $u:]a,b[\to\R^d$ has bounded variation, 
$B(x,\cdot)$ is continuously differentiable and $B(\cdot,s)$ has bounded variation.
We assume that
there exists a countable set $\mathcal N\subset ]a,b[$ such that 
the jump set $J_{B(\cdot,s)}$ of ${B(\cdot,s)}$
is contained in $\mathcal{N}$ for every $s\in\R^{d}$.
%we have $\mathcal N_{s}\subseteq\mathcal N$,
%where $\mathcal N_{s}:=J_{B(\cdot,s)}$\, is the jump set of ${B(\cdot,s)}$\,. 
Moreover we require that there exists a positive finite Cantor measure $\lambda$ on $]a,b[$ such that $(D^{c}_x B)(\cdot,s) \ll \lambda$ for every $s\in \R^d$\,.
For every $s\in \R^d$ let $\psi(\cdot,s)$ denote the Radon-Nikod\'ym derivative of the measure $(D^{c}_x B)(\cdot,s) $ with respect to $\lambda$, i.e.
$$
\psi(\cdot,s):= \frac{d(D^{c}_x B)(\cdot,s)}{d\lambda}\,.
$$

We show that (see Theorem \ref{chain} below), 
under suitable additional assumptions, the composite function $v(x):=B\left(x,u(x)\right)$ 
belongs to $BV(]a,b[)$
and for any
$\phi\in C_0^1(]a,b[)$ we have
\begin{equation}\label{f:intro}
\begin{split}
\int_{]a,b[} \phi'(x)  v(x)\,dx = {} &
-\int_{]a,b[} \phi(x) (\nabla_x B)(x,u(x))\, dx
\\ & -
\int_{]a,b[} \phi(x) \psi(x,u(x))\, d\lambda
\\ & -
\int_{]a,b[} \!\phi(x) (D_{s}B)(x,u(x))\cdot\nabla u(x)\,dx
\\ & -
\int_{]a,b[}\phi(x)({D_{s}B})(x,u(x))\,\cdot dD^cu(x)
\\ & -
\sum_{x\in  \mathcal N\cup J_u}\!\!\phi(x)
\left[B(x_{+},u(x_{+}))-B(x_{-},u(x_{-}))\right]\,,
\end{split}
\end{equation}
where $u(x_{+})$, $u(x_{-})$ and $B(x_{+},s)$, $B(x_{+},s)$ are 
respectively the right and left limits of $u$ and $B(\cdot,s)$ at $x$\,.

The proof is based on a regularization argument via convolutions and 
on the Ambrosio-Dal Maso derivation formula (see \cite{adm}).
In order to prove the convergence of the regularized terms
we follow the arguments as in \cite{dcfv2}, 
with the exception of the term of derivation with respect to $x$,
which requires a different nontrivial analysis
due to the possible interaction of the jump points of $u$ 
and the jump points 
of $B(\cdot,s)$.

In order to understand this effect, we consider firstly a piecewice constant function $u$, and we show that, in this case, the contributions of the jump parts can be collected as in the summation in (\ref{f:intro}).
The general case can be obtained by using a precise approximation result, proven in Section 3, of a $BV$ function by piecewise constant functions which holds only for functions defined on an interval. By the way, we remark that this is one of the technical point where it is crucial the restriction to a one dimensional space variable. 

In Section 5, we consider the case $d=1$ and we compare our chain rule with the formula proven in \cite{dcfv2}. We verify the (necessary!) coincidence of the terms of derivation with respect to $x$ in the case of piecewise constant functions $u$. 
Anyway, we remark that the form (\ref{f:intro}) is new also in this one-dimensional case.

\bigskip
Finally,
in Section 6 we discuss the use of our chain rule formula
to conservation laws with a discontinuous flux.
The case of discontinuous fluxes has been intensively studied in
the last few years (see e.g.\ 
\cite{AP,disc0,disc2,disc3,disc1,disc4,GNPT,disc6,disc5,disc8,disc7,disc9} 
and the references therein)
due to a large class of applications in physical and traffic models. 

We do not address directly the issue of existence or uniqueness of solutions, for which we refer to the references listed above.
We remark that the existence results are proved only for very special fluxes (tipically, only one jump in the space variable is allowed).
For what concerns uniqueness, we recall a fairly general result by Audusse and Perthame 
\cite{AP}, which is based on an extension of the classical Kruzkov method.

In this framework, using our chain rule formula, we propose a definition of entropic solution which is a generalization of the classical one valid for smooth fluxes (see e.g. \cite{Daf}).

We show that our definition is equivalent, under suitable assumptions,
to the notion of 
Kruzkov--type entropic solution obtained using
the adapted entropies introduced by Audusse and Perthame in \cite{AP}.
Our formula provides a neat environment for the treatment
of all terms containing a derivative of the composition with a $BV$ function
which are present in equations of this type.

We are inclined to believe that the methods here introduced can be useful to treat analogous 
problems in the same context.

\bigskip
\textbf{Acknowledgements.}
The authors would like to thank Gianni Dal Maso and Nicola Fusco
for stimulating discussions and suggestions during the preparation
of the manuscript.

%%%%%%%%%%%%%%%%%%%%%%%%%%%%%%%%%%%%%%%%%%%%%%%%%
\section{BV functions of one variable}
In this section we introduce the $BV$ functions of one variable and we recall the definitions and the basic results (see the book \cite{AFP} for a general survey on this subject).

We recall that a function $\u=(u^1,\dots,u^d)\in L^1(]a,b[;\R^d)$ belongs to 
the space $BV(]a,b[;\R^d)$ if and only if
\begin{equation}\label{definBV}
TV(\u):=\sup\Big\{\sum_{i=1}^{d}\int_a^b u^i D\phi^i\, dx: \phi\in C^1(]a,b[;\R^d)\,, \|\phi\|_\infty\leq 1\Big\}<+\infty
\end{equation}
(if $d=1$ the usual notation is  $BV(]a,b[$)\,).
This implies that the distributional derivative $D\u=(Du^1,\dots,Du^d)$ is a bounded Radon measure in $]a,b[$ and the following integration by parts formula holds:
\begin{equation}\label{parts}
\int_a^b u^i D\phi^i\, dx=-\int_a^b \phi^i\,dDu^i\quad\quad \forall \phi\in C^1(]a,b[;\R^d),\quad i=1,\dots,d\,.
\end{equation}
A measure $\mu$ is absolutely continuous with respect to a positive measure $\lambda$ ($\mu\ll\lambda$ in symbols) if
$\mu(B)=0$ for every measurable set $B$ such that $\lambda(B)=0$\,.
We will often consider the Lebesgue decomposition
\begin{equation}\label{decomp}
D\u=\nabla \u\,dx+D^s\u\,,
\end{equation}
where $\nabla \u$ denotes the density of the {\it absolutely continuous part} of $D\u$ with respect to the Lebesgue measure on $]a,b[$\,, while $D^s\u$ is its {\it singular part}.

For every function $\u\in BV(]a,b[;\R^d)$ the following left and right limits
\begin{equation}\label{apprlim}
\u(x_-):=\lim_{\eps\to 0+}\frac{1}{\eps}\int_{x-\eps}^{x}\u(y)\,dy\,,
\quad\quad
\u(x_+):=\lim_{\eps\to 0+}\frac{1}{\eps}\int_x^{x+\eps}\u(y)\,dy
\end{equation}
exist at every point $x\in ]a,b[$\,. 
In fact, $\u(x_-)$ is well defined also in $x=b$, while $\u(x_+)$ exists  also in $x=a$.
The left and right limits just defined coincide a.e. with $\u$  and are left and right continuous, respectively.

It is well known that the \textit{jump set} of $\u$, defined by
\[
J_{\u}:=\{x\in ]a,b[: \u(x_-)\not=\u(x_+)\}
\]
is at most countable.
The singular part $D^s\u$ of the measure $D\u$ can be splitted into the sum of a  measure concentrated on $J_{\u}$ and a measure $D^c\u$, called the {\it Cantor part} of $D\u$, as in the following formula:
\begin{equation}\label{decomp1}
D^{s}\u=D^c\u+\big(\u(x_+)-\u(x_-)\big)\mathcal{H}^{0}\lfloor{J_{\u}}\,,
\end{equation}
where $\mathcal H^{0}$ stands for the counting measure. Moreover, we consider the so-called {\it diffuse part} of the measure $D\u$ concentrated on $C_{\u}:=]a,b[\setminus J_{\u}$ and defined by
\begin{equation}\label{diffusepart}
\widetilde D\u:=\nabla \u\,dx+D^c\u
\,,
\end{equation}
while
\begin{equation}\label{atomicpart}
D^j\u:=\big(\u(x_+)-\u(x_-)\big)\mathcal{H}^{0}\lfloor{J_{\u}}
\end{equation}
is called the {\it atomic part} of $D\u$\,. Analogously, we said that a nonnegative Borel measure $\mu$ is a {\it Cantor measure} if $\mu$ is a diffuse measure orthogonal to the Lebesgue measure.

If $|D\u|$ denotes the total variation measure of $D\u$, we have that $|D\u|(]a,b[)$ equals the value of the supremum in (\ref{definBV}); 
moreover, for every Borel subset $B$ of $]a,b[$,
\begin{equation}\label{decomp2}
|D\u|(B)=\int_{B}|\nabla\u|(x)\,dx+|D^{c}\u|(B)+\sum_{x\in J_{\u}\cap B}|\u(x_{+})-\u(x_{-})|\,.
\end{equation}

Now we recall the classical definition for $BV$ functions of one variable, by means of the {\it pointwise variation}; for every function $\u:]a,b[\to\R^{d}$, it is defined by
\begin{equation}\label{pointvar}
pV(\u):=\sup\Big\{\sum_{i=1}^{n-1}|\u(t_{i+1})-\u(t_{i})|: a<t_{1}<\dots<t_{n}<b\Big\}.
\end{equation}
We remark that every function $\u$ having finite pointwise variation 
belongs to the space $L^{\infty}(]a,b[;\R^d)$,
since its oscillation is controlled by $pV(\u)$.
Moreover every bounded monotone real valued function has finite pointwise variation and any (real valued) function having finite pointwise variation can be splitted into the difference of two monotone functions.

In order to avoid that $\u$ changes if it is  modified even at a single point, we introduced the following definition of {\it essential variation}
\begin{equation}\label{essvar}
eV(\u):=\sup\Big\{pV(\v): \v=\u {\rm \  \ a.e. \  in \  } ]a,b[\Big\}\,.
\end{equation}
Finally, by Theorem 3.27 in \cite{AFP}, the essential variation $eV(\u)$ coincides with the variation $V(\u)$, defined in (\ref{definBV}). Any function $\overline \u$ in the equivalence class of $\u$ (that is $\u=\overline\u$ a.e.) such that
$pV(\overline\u)=eV(\u)=TV(\u)$
is called a {\it good representative}. 
By Theorem 3.28 in \cite{AFP}, we have that $\overline \u$ is a good representative if and only if for every $x\in ]a,b[$
\begin{equation}\label{f:jumpseg}
\overline \u(x)\in \Big\{\theta \u(x_{-})+(1-\theta)\u(x_{+}): \theta\in [0,1]
\Big\}\,.
\end{equation}
In particular, 
if (\ref{f:jumpseg}) holds with $\theta=0$ (resp.\ $\theta=1$)
for every $x\in ]a,b[$, we have that
$\overline{u} = u^+$ (resp.\ $\overline{u} = u^+$),
while for $\theta = 1/2$
$\overline{u}$ coincides with the so-called {\it precise representative}
\begin{equation}\label{precise}
\u^*(x):=\frac{\u(x_+)+\u(x_-)}{2}\,.
\end{equation}
Any good representative $\overline \u$ is continuous in $]a,b[\setminus J_{\u}$,
and it has a jump discontinuity at any point of $J_{\u}$  satisfying
$\overline{\u}(x_{-})=\u(x_{-})$, $\overline{\u}(x_{+})=\u(x_{+})$.
Finally, any good representative $\overline \u$ is a.e.\ differentiable in $]a,b[$ and its derivative $\nabla\u$ coincides with the density of $D\u$ with respect to the Lebesgue measure.
If not otherwise stated,
in this paper we always consider good representatives of $BV$ functions.

For every scalar $BV$ function $u$ the following {\it coarea formula} holds (see \cite{Fed}, Theorem 4.5.9):
\begin{equation}\label{coarea}
\int_a^b g(x) \,d|Du|(x)=\int_{-\infty}^{+\infty}dt\,\int_{\{u(x_-)\leq t\leq u(x_+) \}} g(x)\,d\mathcal H^0(x)
\end{equation}
for every Borel function $g:]a,b[\to[0,+\infty[$.

We remark that a Leibnitz rule formula in $BV(]a,b[)$ holds: if $v,w\in BV(]a,b[)$, then $vw\in BV(]a,b[)$ and
\begin{equation}\label{f:Leib}
D(vw) = v^* Dw + w^*Dv,
\end{equation}
in the sense of measures (see Example 3.97 in \cite{AFP} and Remark 3.3 in  \cite{dcfv2}).

Now we recall the properties of the convolution of a $BV$ function. Let $\varphi$ be a standard convolution kernel and
let
$(\varphi_{\varepsilon})_{\varepsilon>0}$ be a family of
mollifiers, i.e. $\varphi_\eps(x):=\eps^{-1}\varphi(x/\eps)$.  
For every function $\u\in BV(]a,b[;\R^{d})$ we define
\[
\u_{\eps}(x):=(\u*\varphi_\eps)(x)=
\int_{a}^{b}\varphi_{\varepsilon}(x-y)\,\u(y)\,dy
\]
for $x\in]a',b'[\subset\subset]a,b[$
and $0<\varepsilon<\min(b-b',a'-a)$. 
We have that the mollified functions $\u*\varphi_\eps$ converge a.e.\ to $\u$ in $]a,b[$ and 
everywhere in $[a,b[$ to the precise representative $\u^*$ (see Proposition 3.64(b) and Corollary 3.80 in \cite{AFP}).
Moreover
$\nabla \u_\eps=\nabla (\u*\varphi_\eps)=(D\u)*\varphi_\eps$
(see Proposition 3.2 in \cite{AFP}),
where for a Radon measure $\mu$, the convolution $\mu*\varphi_\eps$ is defined as
\[
(\mu*\varphi_\eps)(x):=
\int_{a}^{b}\varphi_{\varepsilon}(x-y)\,d\mu(y)\,.
\]
Finally, we recall that the measures $\nabla \u_\eps\,dx$ locally weakly$^*$ converge in $]a,b[$ to the measure $D\u$, i.e. for every $\phi\in C_0(]a,b[)$ we have
\[
\int_a^b \phi\nabla \u_\eps\,dx\quad\to\quad\int_a^b \phi\,dD\u\,,\quad{\rm as}\ \eps\to 0
\]
(see Theorem 2.2 in \cite{AFP}).

\section{An approximation result}
In this section we exhibit an explicit piecewise constant approximation of a $BV$ function, which is taylored to our needs in the proof of Theorem \ref{chain}\,.

\begin{lemma}\label{l:approx1}
Let $v\in BV(]a,b[)$, let $J$ denote its jump set,
and let $P\subset ]a,b[\setminus J$ be a countable set.
Then, for every $\eps >0$ and every finite set $P_{\eps}\subset P$
there exists a piecewise constant function
$v_{\eps}\colon ]a,b[\to\R$ such that:
\begin{itemize}
\item[(i)]
the (finite) jump set $J_{\eps}$ of $v_{\eps}$ contains all jumps of $v$
of size greater than $\eps/3$;
\item[(ii)]
$TV(v_{\eps}) \leq TV(v)$;
\item[(iii)]
$J_{\eps}\cap P = \emptyset$ and $v_{\eps}(x) = v(x)$ for every $x\in P_{\eps}$;
\item[(iv)]
$v_{\eps}(x_+) = v(x_+)$, $v_{\eps}(x_-) = v(x_-)$, for every $x\in J\cap J_{\eps}$;
\item[(v)]
$|v_{\eps}(x) - v(x)| < \eps$ for every $x\in ]a,b[\setminus J$
(the inequality holds everywhere if $v$ is a good representative).
\end{itemize}
\end{lemma}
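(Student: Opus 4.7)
My plan is to construct $v_\eps$ as a piecewise constant function on a carefully chosen finite partition of $]a,b[$, built around the large jumps of $v$ and the prescribed points of $P_\eps$.

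Since $v\in BV(]a,b[)$, the sum $\sum_{x\in J}|v(x_+)-v(x_-)|$ is finite, so only finitely many jumps of $v$ exceed $\eps/3$; call them $x_1<\cdots<x_N$. Merge them with $P_\eps$ into a sorted list of constraint points $z_1<\cdots<z_L$ (the union is disjoint since $P\cap J=\emptyset$). Next I choose a finite partition $a=t_0<\cdots<t_M=b$ such that: (a) every $z_\ell$ appears as some $t_j$; (b) between any two consecutive $z_\ell$'s there is at least one intermediate $t_j$; (c) every $t_j$ that is not a $z_\ell$ lies in the co-countable set $]a,b[\setminus(P\cup J)$; and (d) the oscillation of $v$ on each $]t_j,t_{j+1}[$ is strictly less than $\eps/2$. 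Such a partition exists because on each $]z_\ell,z_{\ell+1}[$ the restriction of $v$ has no jumps exceeding $\eps/3$, hence can be split into finitely many pieces of total variation (and thus oscillation) below $\eps/2$, with breakpoints perturbed away from $P\cup J$.

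On each subinterval $]t_j,t_{j+1}[$ I set $v_\eps\equiv c_j$ by the rule: if $t_j$ is a constraint point, $c_j:=v(t_{j+})$ (which is $v(x_{i+})$ if $t_j=x_i$ and $v(y_k)$ if $t_j=y_k$); otherwise if $t_{j+1}$ is a constraint point, $c_j:=v(t_{j+1-})$; otherwise $c_j:=v(\tau_j)$ for any chosen $\tau_j\in\,]t_j,t_{j+1}[$. Property (b) guarantees that each subinterval has at most one constrained endpoint, so this recipe is unambiguous. At every breakpoint I set $v_\eps(t_j):=v(t_j)$, which gives $v_\eps(y_k)=v(y_k)$ at $P_\eps$ points and makes (v) hold at the other breakpoints, using for large jumps the fact that $v(x_i)\in[v(x_{i-}),v(x_{i+})]$ for a good representative. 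Properties (i), (iii), (iv) are then immediate: at $t_j=x_i$ the adjacent constants differ by more than $\eps/3$, so $x_i\in J_\eps$; at $t_j=y_k$ both adjacent constants equal $v(y_k)$, so $y_k\notin J_\eps$; every remaining breakpoint lies in $]a,b[\setminus P$. Condition (v) in the interior of each open subinterval follows from (d), since each $c_j$ is a value or a one-sided limit of $v$ on $\overline{]t_j,t_{j+1}[}$, so $|v_\eps(x)-v(x)|$ is bounded by the oscillation of $v$ on $]t_j,t_{j+1}[$, which is below $\eps/2$.

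The main step is (ii). Writing $TV(v_\eps)=\sum_{j=1}^{M-1}|c_j-c_{j-1}|$, each $c_j$ equals $v(\sigma_j^{*_j})$ with $\sigma_j\in[t_j,t_{j+1}]$ and $*_j\in\{-,+,\emptyset\}$ encoding whether $c_j$ is a left/right limit at an endpoint or the value at an interior $\tau_j$. Given $\delta>0$, by the very definition of one-sided limit I can select $s_j\in\,]t_j,t_{j+1}[$ on the side of $\sigma_j$ indicated by $*_j$ and so close to $\sigma_j$ that $|v(s_j)-c_j|<\delta/(2M)$; this is compatible with $v(s_j)$ equaling the good representative at $s_j$ since this holds for a.e.\ choice of $s_j$. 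Since $s_j<t_{j+1}<s_{j+1}$ automatically, the sequence $s_0<s_1<\cdots<s_{M-1}$ is strictly increasing, and by the triangle inequality together with the definition of pointwise variation,
\begin{equation*}
TV(v_\eps)\le \sum_{j=1}^{M-1}|v(s_j)-v(s_{j-1})|+\delta\le pV(v)+\delta=TV(v)+\delta,
\end{equation*}
using Theorem 3.27 in \cite{AFP}. Letting $\delta\to0$ yields (ii). The main obstacle will be precisely this bookkeeping: each $c_j$ is a one-sided limit of $v$ that must be approximated from the correct side, and one has to verify that the local approximations patch together into a single strictly increasing sequence compatible with the constrained partition.
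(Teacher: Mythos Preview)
Your argument is correct and follows the same overall strategy as the paper: isolate the finitely many large jumps, build a sufficiently fine partition of $]a,b[$, and take $v_\eps$ piecewise constant with values that are one-sided limits (or actual values) of $v$ on each piece. There are a few implementation differences worth noting. The paper obtains the smallness of oscillation on each subinterval through the decomposition $v=v_C+v_B+v_S$ into continuous, big-jump and small-jump parts, invoking the uniform continuity of $v_C$; you obtain the same conclusion more directly from the finiteness of $|Dv|$ together with the absence of atoms of size $>\eps/3$ on each $]z_\ell,z_{\ell+1}[$. The paper places each point of $P_\eps$ in the interior of a partition interval (and sets the constant equal to the value there), whereas you place it at a partition node with the two adjacent constants both equal to $v(y_k)$; since those two intervals could be merged, the two constructions are equivalent. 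Finally, for (ii) the paper simply declares that it ``is clear from the construction'', while you supply the verification---approximating each $c_j$ by a nearby value $v(s_j)$ taken at a point where $v$ agrees with a good representative, and bounding $\sum|v(s_j)-v(s_{j-1})|$ by $pV(\bar v)=TV(v)$. One small point: in the third case of your definition you should pick $\tau_j$ to be a continuity point of $v$, so that $c_j=v(\tau_j)$ is representative-independent and $s_j=\tau_j$ works in the variation estimate; this is harmless since continuity points are dense.
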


\begin{proof}
Without loss of generality we can assume that
$v$ is a good representative.
Let $J=\{x^j\}$ be the jump set of $v$. Since $v\in BV$,
there exists $N\in \bbbn$ such that
\begin{equation}\label{f:resto}
\sum_{j>N} |v(x^j_+) - v(x^j_-)| \leq \frac{\eps}{3}\, .
\end{equation}
Let us define the functions $v_B, v_S\colon ]a,b[\to\R$ by
\[
\begin{split}
v_B(x) :=
\begin{cases}
\sum\limits_{x_j < x, j\leq N}
[v(x^j_+)-v(x^j_-)],
&\text{if $x\not\in \{x^1,\ldots,x^N\}$},\\
v_B(x^j_-) + v(x^j) - v(x^j_-),
&\text{if $x = x^j$ for some $j\leq N$},
\end{cases}
\\
v_S(x) :=
\begin{cases}
\sum\limits_{x^j < x, j > N}
[v(x^j_+)-v(x^j_-)],
&\text{if $x\not\in \{x^i: i>N\}$},\\
v_S(x^j_-) + v(x^j) - v(x^j_-),
&\text{if $x = x^j$ for some $j > N$}.
\end{cases}
\end{split}
\]
It is clear from the definition that the functions
$v_B$ and $v_S$ take into account the big and the small
jumps of $v$ respectively, and that the function
$v_C := v - v_B - v_S$
is continuous in $]a,b[$.
In addition, $v_C$ is uniformly continuous in $]a,b[$,
since it can be continuously extended to $[a,b]$.
Then there exists $\delta > 0$ such that
\begin{equation}\label{f:fc}
|v_C(x) - v_C(y)| < \frac{\eps}{3}
\qquad \forall x,y\in ]a,b[,\ |x-y| < \delta.
\end{equation}
Moreover, from (\ref{f:resto}) we have that
\begin{equation}\label{f:fs}
|v_S(x)| \leq \sum_{j>N} |v(x^j_+) - v(x^j_-)| < \frac{\eps}{3}\,,
\quad\forall x\in ]a,b[\, .
\end{equation}

Let $J_{\eps} = \{y^j\}_{j=0}^m$,
with $a=y^0<y^1<\cdots < y^m = b$, be a partition of $[a,b]$
satisfying the following properties:
\begin{itemize}
\item[(a)]
$y^i - y^{i-1} < \delta$ for every $i\in \{1,\ldots,m\}$;
\item[(b)]
$x^j\in J_{\eps}$ for every $j\in \{1,\ldots,N\}$;
\item[(c)]
for every $i\in\{1,\ldots, m-1\}$, if $y^i = x^j$ for some $j\in \{1,\ldots,N\}$,
then $y^{i-1}, y^{i+1}\not\in\{x_1,\ldots, x_N\}$;
\item[(d)]
$P\cap J_{\eps}=\emptyset$; moreover,
each interval $]y^{i-1}, y^i[$ contains at most one point of $P_{\eps}$ and,
in that case, $y^{i-1}, y^{i}\not\in\{x_1,\ldots, x_N\}$.
\end{itemize}

Let $i\in \{1,\ldots,m\}$.
For every $x,y\in ]y^{i-1}, y^i[$ we have that
\[
\begin{split}
|v(x) - v(y)| \leq {} &
|v(x) - v_C(x) - v_B(x)| + |v(y) - v_C(y) - v_B(y)|
\\ & +
|v_C(x)-v_C(y)| + |v_B(x)-v_B(y)|\,.
\end{split}
\]
Since $]y^{i-1}, y^i[$ does not contain points of
$\{x^1,\ldots, x^N\}$ we have that
$v_B(x) = v_B(y)$.
Moreover, $v-v_C-v_B = v_S$, hence by (\ref{f:fc}) and (\ref{f:fs})
we obtain
\begin{equation}\label{f:oscill}
|v(x)-v(y)| \leq |v_S(x)| + |v_S(y)| + |v_C(x) - v_C(y)| < \eps
\end{equation}
($x,y\in ]y^{i-1}, y^i[$, $i = 1,\ldots, m$).

Finally, let us define the function $v_{\eps}\colon ]a,b[\to\R$ by
$v_{\eps}(y^i) = v(y^i)$ for every $i\in \{0,\ldots,m\}$, and,
on every interval $]y^{i-1}, y^i[$ ($i\in \{1,\ldots,m\}$) by
\[
v_{\eps}(x) :=
\begin{cases}
v(\overline{x}),
&\text{if $\emptyset\neq P_{\eps}\cap ]y^{i-1}, y^i[ = \{\overline{x}\}$},\\
v(y^i_-),
&\text{if $y^i\in \{x^1,\ldots,x^N\}$},\\
v(y^{i-1}_+),
&\text{otherwise}\,.
\end{cases}
\]
It is clear from the construction that (i)--(iv) hold.
Moreover, on every interval $]y^{i-1}, y^i[$ ($i\in \{1,\ldots,m\}$)
we have that $v_{\eps}(y^i_-) = v(y^i_-)$ or
$v_{\eps}(y^i_+) = v(y^i_+)$
or $v_{\eps}(\overline{x}) = v(\overline{x})$ for some
$\overline{x}\in ]y^{i-1}, y^i[$,
hence from (\ref{f:oscill}) we conclude that also (v) holds.
\end{proof}

\begin{lemma}\label{l:approx}
Let $\u\in BV(]a,b[;\R^d)$ be a good representative, let
$J$ denote its jump set, and let $P\subset ]a,b[\setminus J$ be
a countable set.
Then there exists a sequence of piecewise constant functions
$\u_n\in BV(]a,b[, \R^d)$, $n\in\bbbn$,
satisfying the following properties:
\begin{itemize}
\item[(i)]
the (finite) jump set $J_n$ of $\u_{n}$ does not contain points of $P$
and contains all jumps
$x\in J$ such that $|\u(x_+)-\u(x_-)| > 1/n$;
\item[(ii)]
$TV(\u_{n}) \leq TV(\u)$;
\item[(iii)]
$|\u_{n}(x) - \u(x)| < C/n$ for every $x\in ]a,b[$ and $n\in\bbbn$,
where $C = 3\sqrt{d}$;
\item[(iv)]
for every $x\in P$ there exists $n_x\in\bbbn$ such that
$\u_{n}(x) = \u(x)$
for every $n\geq n_x$;
\item[(v)]
for every $x\in J$ there exists $n_x\in\bbbn$ such that
$\u_{n}(x_{+}) = \u(x_{+})$, $\u_{n}(x_{-}) = \u(x_{-})$
for every $n\geq n_x$.
\end{itemize}
\end{lemma}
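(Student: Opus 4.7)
My plan is to follow the template of the scalar proof of Lemma \ref{l:approx1}, replacing each scalar absolute value by the Euclidean norm on $\R^d$. Every ingredient of that proof transfers to the vector case: the summability $\sum_j |\u(x^j_+) - \u(x^j_-)| \leq TV(\u) < \infty$, the ``big jumps/small jumps/continuous remainder'' decomposition, and the uniform continuity on $[a,b]$ of that remainder.

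Fix $n \in \bbbn$. Enumerate $J = \{x^j\}_j$ and $P = \{p_k\}_k$, and set $P_n := \{p_1, \ldots, p_n\}$. Choose $N = N(n)$ such that both $\sum_{j > N} |\u(x^j_+) - \u(x^j_-)| < 1/n$ and $\{x \in J : |\u(x_+) - \u(x_-)| > 1/n\} \subset \{x^1, \ldots, x^N\}$; both requirements are compatible because the set of jumps of size exceeding $1/n$ is finite. Define vector-valued $\u_B$, $\u_S$ by the same formulas as in the scalar case (collecting the big, resp.\ small, jumps) and set $\u_C := \u - \u_B - \u_S$; the latter is continuous on $]a,b[$ and extends continuously to $[a,b]$, so there is $\delta > 0$ with $|\u_C(x) - \u_C(y)| < 1/n$ whenever $|x-y| < \delta$. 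Next pick a partition $a = y^0 < \cdots < y^m = b$ fulfilling the conditions (a)--(d) of the scalar proof, and define $\u_n$ on each $]y^{i-1}, y^i[$ by the same three-case rule ($\u(\overline{x})$ when $\overline{x} \in P_n \cap ]y^{i-1}, y^i[$; $\u(y^i_-)$ when $y^i$ is a big jump; $\u(y^{i-1}_+)$ otherwise), setting $\u_n(y^i) := \u(y^i)$ at the nodes.

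Items (i), (iv), (v) follow directly from the construction once $n$ is large enough that $P_n$ reaches a given $p \in P$ and $\{x^1, \ldots, x^N\}$ reaches a given $x \in J$. Item (iii) follows from the vector oscillation estimate
\[
|\u(x) - \u(y)| \leq |\u_S(x)| + |\u_S(y)| + |\u_C(x) - \u_C(y)| < 3/n,
\]
valid for any $x, y$ in the same open subinterval (where $\u_B$ is constant), which gives $|\u_n(x) - \u(x)| < 3/n \leq 3\sqrt{d}/n$.

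The step I expect to require the most care is (ii). Each jump of $\u_n$ at a partition node $y^i$ has the form $|\u(z^i_\ast) - \u(z^{i-1}_\ast)|$, where $z^{i-1}_\ast \in [y^{i-1}, y^i]$ and $z^i_\ast \in [y^i, y^{i+1}]$ are the anchors dictated by the three-case rule (with one-sided evaluation when $z^i_\ast$ is itself a big jump). These anchors form a nondecreasing chain $z^0_\ast \leq z^1_\ast \leq \cdots \leq z^{m-1}_\ast$, so the identity $eV(\u) = TV(\u)$ of Theorem 3.27 in \cite{AFP} yields $TV(\u_n) = \sum_i |\u(z^i_\ast) - \u(z^{i-1}_\ast)| \leq TV(\u)$. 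The only delicate case is when two consecutive anchors collapse onto a big jump $y^i = x^j$, but the one-sided convention then makes the corresponding contribution exactly $|\u(x^j_+) - \u(x^j_-)|$, which is already part of $TV(\u)$.
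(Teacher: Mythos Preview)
Your argument is correct but takes a different route from the paper. The paper's proof is a two--line reduction: it applies the scalar Lemma~\ref{l:approx1} to each component $u^i$ separately (with $\eps = 3/n$ and $P_\eps = \{z_1,\ldots,z_n\}$) and assembles $\u_n := (u^1_n,\ldots,u^d_n)$, noting that $J = \bigcup_i J_{u^i}$ and $J_n = \bigcup_i J_{u^i_n}$. The constant $C = 3\sqrt d$ in (iii) is precisely the cost of combining $d$ componentwise bounds $|u^i_n - u^i| < 3/n$ into a Euclidean one. Your direct vector construction uses a single partition and therefore delivers the sharper estimate $|\u_n - \u| < 3/n$; more importantly, it gives a clean proof of (ii) via the nondecreasing chain of anchors and $pV(\u)=TV(\u)$, whereas in the componentwise approach the scalar bounds $TV(u^i_n)\le TV(u^i)$ do not obviously combine to $TV(\u_n)\le TV(\u)$ in the Euclidean norm when the components carry different partitions (this is harmless for the sequel, since (ii) is never invoked in the proof of Theorem~\ref{chain}). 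Two small points to make your sketch airtight: ensure $N(n)\to\infty$ (e.g.\ take $N(n)\ge n$) so that every fixed $x\in J$ eventually lies among the big jumps, as needed for (v); and for (ii), since consecutive anchors may collapse onto a big jump and carry one-sided values, justify $\sum_i |c_{i+1}-c_i|\le pV(\u)$ by perturbing each anchor to a nearby point of $]a,b[\setminus J$ (keeping strict order) and passing to the limit, which you already indicate.
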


\begin{proof}
Let $P = \{z_j\}_j$.
For every $n\in\bbbn$ let us apply Lemma~\ref{l:approx1} to each
component $u^i$, $i=1,\ldots,d$ with $\eps = 3/n$
and $P_{\eps} = \{z_1, \ldots, z_n\}$.
The conclusion follows from the fact that
$J = \bigcup_{i=1}^d J_{u^i}$ and
$J_n = \bigcup_{i=1}^d J_{u^i_n}$, $n\in\bbbn$.
\end{proof}

%%%%%%%%%%%%%%%%%%%%%%%%%%%%%%%%%%%%%%%%%%%%%%%%%%%%%%%%%%%%%%%%%%%%%%%%%%%%%%%%%%%%%%%%%%%%%%%%%%%%%%%%%%%
\section{A chain rule formula in $BV(]a,b[;\R^{d})$}

Let $B:]a,b[\times\R^{d}\rightarrow\R$ be a function such that $B\left(\cdot, \w\right)  \in BV(]a,b[)$ for all $\w\in\R^{d}$\,.
We recall that for every $\w\in \R^d$
\begin{equation}\label{dec}
(D_x B)(\cdot,\w)=(\nabla_x B)(\cdot,\w)\,dx+(D^{c}_x B)(\cdot,\w)+
\sum_{x\in \mathcal N_{\w}}
\left[B(x_{+},\w)-B(x_{-},\w)\right]\delta_x
\end{equation}
is the usual decomposition of the measure $(D_x B)(\cdot,\w)$ with respect to the Lebesgue measure,
where $\mathcal N_{\w}:=J_{B(\cdot,\w)}$ is the jump set of $B(\cdot,\w)$\,.

\begin{theorem}\label{chain}
Let $B:]a,b[\times\R^{d}\rightarrow\R$ be a locally bounded function such that

\begin{enumerate}
\item[(A1)] for all $\w\in\R^{d}$ the function $B\left(\cdot, \w\right)$
belongs to $BV(]a,b[)$ and there exists a countable set $\mathcal N\subset ]a,b[$ such that for every $\w\in\R^{d}$ we have
$$
\mathcal N_{\w}\subseteq\mathcal N\,;
$$

\item[(A2)] for every compact set $M\subseteq\R^{d}$ there exists a finite positive Borel measure $\mu_{M}$ in  $]a,b[$ such that for every $\w,\w'\in M$ and every Borel set $A\subseteq ]a,b[$
\[
|(D_x B)(\cdot,\w)-(D_x B)(\cdot,\w')|(A)\leq |\w-\w'|\mu_{M}(A)\,;
\]

\item[(A3)]  for all $x\in ]a,b[\setminus \mathcal N$ the function $B\left(x,\cdot\right)$
belongs to $C^{1}(\R^{d})$ and, for every compact set $M\subset\R^d$,
there exists a constant $D_M>0$ such that
\[
|(D_{\w} B)(x, \w)| \leq D_M,\qquad
\forall x\in ]a,b[\setminus \mathcal N\,, \w\in M\,;
\]

\item[(A4)]
the function  $(D_{\w}B)\left(\cdot,\w\right)$
belongs to $BV(]a,b[;\R^d)$ for every $\w\in\R^{d}$;

\item[(A5)]  there exists a positive finite Cantor measure $\lambda$ on $]a,b[$ 
such that $(D^{c}_x B)(\cdot,\w) \ll \lambda$
for every $\w\in \R^d$.
\end{enumerate}
Then for every $\u\in BV(]a,b[;\R^{d})$ the composite function
%$v:]a,b[\rightarrow\R$, defined by
$v(x):=B\left(x, \u(x)\right)$,  $x\in]a,b[$,
belongs to $BV(]a,b[)$
and for any
$\phi\in C_0^1(]a,b[)$ we have
\begin{equation}\label{f:int}
\begin{split}
\int_{]a,b[} \phi'(x)  v(x)\,dx = {} &
-\int_{]a,b[} \phi(x) (\nabla_x B)(x,\u(x))\, dx
\\ & -
\int_{]a,b[} \phi(x) \psi(x,\u(x))\, d\lambda
\\ & -
\int_{]a,b[} \!\phi(x) (D_{\w}B)(x,\u(x))\cdot\nabla \u(x)\,dx
\\ & -
\int_{]a,b[}\phi(x)({D_{\w}B})(x,\u(x))\,\cdot dD^c\u(x)
\\ & -
\sum_{x\in  \mathcal N\cup J_\u}\!\!\phi(x)
\left[B(x_{+},\u(x_{+}))-B(x_{-},\u(x_{-}))\right]\,,
\end{split}
\end{equation}
where for every $\w\in \R^d$ the function $\psi(\cdot,\w)$ is the Radon-Nikod\'ym derivative of the measure $(D^{c}_x B)(\cdot,\w) $ with respect to $\lambda$, i.e.
$$
\psi(\cdot,\w):=\frac{d(D^{c}_x B)(\cdot,\w)}{d\lambda}\,.
$$
\end{theorem}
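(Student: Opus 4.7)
The plan is to prove \eqref{f:int} first for piecewise-constant $\u$ by direct integration by parts, then to extend it to $\u\in BV(]a,b[;\R^d)$ through the convolution regularization $\u_\eps:=\u*\varphi_\eps$, deriving an intermediate formula for $v_\eps(x):=B(x,\u_\eps(x))$ that is itself obtained by piecewise-constant approximation of $\u_\eps$ via Lemma~\ref{l:approx}, and finally passing to the limit as $\eps\to0$. As a preliminary I would verify $v:=B(\cdot,\u)\in BV(]a,b[)$ by a partition argument: for $a<t_0<\cdots<t_N<b$,
\[
|v(t_{i+1})-v(t_i)| \leq |B(t_{i+1},\u(t_{i+1}))-B(t_{i+1},\u(t_i))| + |B(t_{i+1},\u(t_i))-B(t_i,\u(t_i))|;
\]
the first bracket is bounded by $D_M|\u(t_{i+1})-\u(t_i)|$ via (A3), and the sum of the second ones is controlled by the uniform estimate $TV(B(\cdot,\w))\leq TV(B(\cdot,\w_0))+|\w-\w_0|\,\mu_M(]a,b[)$ read off from (A2).

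For piecewise-constant $\u$ with $\u\equiv \w_j$ on $(y_j,y_{j+1})$, $a=y_0<\cdots<y_m=b$, I would integrate by parts on each subinterval against $\phi\in C_0^1(]a,b[)$:
\[
\int_{y_j}^{y_{j+1}}\!\phi'\,B(\cdot,\w_j)\,dx = \phi(y_{j+1})B(y_{j+1}^-,\w_j)-\phi(y_j)B(y_j^+,\w_j)-\int_{(y_j,y_{j+1})}\!\phi\,d(D_xB)(\cdot,\w_j).
\]
Summing over $j$, the boundary contributions telescope (using $\phi(a)=\phi(b)=0$) into $-\sum_{x\in J_{\u}}\phi(x)[B(x_+,\u(x_+))-B(x_-,\u(x_-))]$, while the interior measure integrals, decomposed via \eqref{dec} together with (A5), produce the $\nabla_xB$ integral, the $\psi\,d\lambda$ integral, and the $\mathcal N\setminus J_{\u}$ jump terms. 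Since $\nabla\u=D^c\u=0$ in this case, the resulting identity is exactly \eqref{f:int}.

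For general $\u$ I set $\u_\eps=\u*\varphi_\eps$ and first establish an analogue of \eqref{f:int} for $v_\eps$ in which the $\nabla\u$, $D^c\u$ and $J_{\u}$ contributions are replaced by a single smooth term $\int\phi(D_{\w}B)(\cdot,\u_\eps)\cdot\u_\eps'\,dx$, by applying the piecewise-constant Step above to approximants of $\u_\eps$ coming from Lemma~\ref{l:approx} and invoking an Ambrosio--Dal Maso chain-rule argument in the $\w$-variable (relying on (A3)--(A4)) to see that their atomic parts collapse onto this smooth integral as the mesh vanishes. Then I would let $\eps\to0$: the $\nabla_xB$, $\psi\,d\lambda$ and $\mathcal N$-sum terms converge by the $L^1(dx)$-, $L^1(\lambda)$- and $\ell^1$-Lipschitz dependence on $\w$ of their respective ingredients (read off from (A2) after disintegration), together with the pointwise convergence $\u_\eps\to\u^*$ in $[a,b[$, noting that $\u^*=\u$ both Lebesgue- and $\lambda$-a.e.; the $\int\phi(D_{\w}B)(\cdot,\u_\eps)\cdot\u_\eps'\,dx$ term, by the local weak$^*$ convergence $\u_\eps'\,dx\to D\u$ and the continuity in $\w$ of $D_{\w}B(x,\cdot)$ from (A3), converges to $\int\phi\,D_{\w}B(\cdot,\u)\cdot\nabla\u\,dx+\int\phi\,D_{\w}B(\cdot,\u)\cdot dD^c\u+\sum_{x\in J_{\u}}\phi(x)[B(x,\u(x_+))-B(x,\u(x_-))]$.

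The main obstacle is precisely the final reconciliation of the $\mathcal N$-sum with the $J_{\u}$-jump contribution into the unified $\mathcal N\cup J_{\u}$ sum in \eqref{f:int}: at a common point $x\in\mathcal N\cap J_{\u}$ one must assemble a ``$B$-jump with $\u$ fixed'' and a ``$\u$-jump with $B$ fixed'' into the single combined jump $B(x_+,\u(x_+))-B(x_-,\u(x_-))$, which is exactly the jump-set interaction highlighted in the introduction. Properties (iv)--(v) of Lemma~\ref{l:approx}, which force the piecewise-constant approximants to avoid $\mathcal N$ entirely and to match the one-sided traces of $\u$ at every $x\in J_{\u}$, are what make this reconciliation rigorous at the discrete level, so that the correct combined jump emerges in the limit.
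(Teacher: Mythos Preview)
Your approach differs from the paper's in a crucial respect: you mollify $\u$, whereas the paper mollifies $B$ in the $x$-variable, setting $B_\varepsilon(x,\w)=(B(\cdot,\w)*\varphi_\varepsilon)(x)$. With $B_\varepsilon\in C^1$, the classical chain rule applies directly to $B_\varepsilon(x,\u(x))$ with $\u$ fixed, so the diffuse and jump parts in $\u$ appear at once and only the $(D_xB_\varepsilon)(\cdot,\u)$-term must be analysed. It is for this last term that the paper approximates $\u$ (not $\u_\varepsilon$) by piecewise constants via Lemma~\ref{l:approx}, taking $P=\mathcal N\setminus J_{\u}$; properties (iv)--(v) then guarantee that the approximants agree with $\u$ at points of $\mathcal N\setminus J_{\u}$ and reproduce the one-sided traces of $\u$ on $J_{\u}$, which is precisely what makes the combined $\mathcal N\cup J_{\u}$ sum emerge uniformly in $\varepsilon$.

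Your scheme has a genuine gap at the step $\varepsilon\to 0$. The claimed limit of $\int\phi\,(D_{\w}B)(\cdot,\u_\varepsilon)\cdot\u_\varepsilon'\,dx$ does \emph{not} follow from ``weak$^*$ convergence $\u_\varepsilon'\,dx\to D\u$ plus continuity of $D_{\w}B(x,\cdot)$'': both factors depend on $\varepsilon$, and weak$^*$ convergence cannot be paired with a test function that is itself moving. Identifying this limit correctly is essentially an Ambrosio--Dal~Maso style computation of the same depth as the result you are proving. Concretely, near a point $x_0\in J_{\u}\cap\mathcal N$ the mass of $\u_\varepsilon'$ sits on both sides of $x_0$, so the contribution is
\[
\bigl[B(x_0^-,\u^*(x_0))-B(x_0^-,\u(x_0^-))\bigr]+\bigl[B(x_0^+,\u(x_0^+))-B(x_0^+,\u^*(x_0))\bigr],
\]
not $B(x_0,\u(x_0^+))-B(x_0,\u(x_0^-))$ as you write (the latter is not even well defined at $x_0\in\mathcal N$). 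Simultaneously, your $\mathcal N$-sum converges to $\sum_{x\in\mathcal N}\phi(x)\bigl[B(x_+,\u^*(x))-B(x_-,\u^*(x))\bigr]$, with the \emph{precise representative} $\u^*$ at each $x\in\mathcal N\cap J_{\u}$, which is not the term appearing in the formula. The two discrepancies do cancel when added, but you have not carried this out, and your appeal to Lemma~\ref{l:approx}(iv)--(v) cannot help here: in your scheme the lemma is applied to the smooth $\u_\varepsilon$, for which $J_{\u_\varepsilon}=\emptyset$ and (v) is vacuous; the interaction occurs entirely in the $\varepsilon\to 0$ passage, after the discrete approximation has been discarded.
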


\begin{remark}\label{ipo}
By $(A2)$ we obtain that for every compact set $M\subseteq\R^{d}$ there exists a constant
$C_M$  such that
\begin{equation}\label{dom}
|(D_x B)(\cdot,\w)|(]a,b[)\leq C_{M}\,,\quad \forall \w\in M\,.
\end{equation}
Moreover, for a.e.\ $x\in ]a,b[$ we have that
\begin{equation}\label{dom1}
|(\nabla_x B)(x,\w)|\leq C_{M}\,,\quad \forall \w\in M\,,
\end{equation}
and, for every $x\in ]a,b[$,
\begin{equation}\label{dom12}
|\psi(x,\w)|\leq C_{M}\,,\quad \forall \w\in M\,.
\end{equation}
In addition, for a.e.\ $x\in ]a,b[$ and for every Borel set $A\subseteq ]a,b[$
the functions $\w\mapsto (\nabla_x B)(x,\w)$ and $\w\mapsto (D^{c}_x B)(\cdot,\w)(A)$
are Lipschitz continuous in $\R^d$.
Finally, for $\lambda$-a.e.\  $x\in ]a,b[$ the function 
$\w\mapsto \psi(x,\w)$ is Lipschitz continuous too.
\end{remark}

\begin{remark}
Formula $(\ref{f:int})$ can be rewritten in a more explicit way as
\begin{equation}\label{f:int1}
\begin{split}
\int_{]a,b[} & \phi'(x)  v(x)\,dx = {} 
\\ & -\int_{]a,b[} \phi(x) (\nabla_x B)(x,\u(x))\, dx
-
\int_{]a,b[} \!\phi(x) (D_{\w}B)(x,\u(x))\cdot\nabla \u(x)\,dx
\\ & -
\int_{]a,b[} \phi(x) \psi(x,\u(x))\, d\lambda
 -
\int_{]a,b[}\phi(x)({D_{\w}B})(x,\u(x))\,\cdot dD^c\u(x)
\\ & -
\sum_{x\in\mathcal N}\!\!\phi(x)
\left[\frac{B(x_{+},\u(x_{+}))+B(x_{+},\u(x_{-}))}{2}
-\frac{B(x_{-},\u(x_{+}))+B(x_{-},\u(x_{-}))}{2}\right]
\\ & -
\sum_{x\in J_\u}\!\!\phi(x)
\left[B^{*}(x,\u(x_{+}))-B^{*}(x,\u(x_{-}))\right]\,,
\end{split}
\end{equation}
where for every $x\in ]a,b[$ and $ \w\in \R^{d}$
$$
B^{*}(x,\w):=\frac{B(x_{+},\w)+B(x_{-},\w)}{2}
$$
is the precise representative
of the $BV$ function $x\mapsto B(x,\w)$\,.

In fact, it is easy to check that for every $x\in J_\u\cap \mathcal N$ we have
\begin{align*}
& B(x_+,\u(x_{+}))-B(x_-,\u(x_{-}))
 = \left[B^{*}(x,\u(x_{+}))-B^{*}(x,\u(x_{-}))\right]
\\ & +
\frac{B(x_{+},\u(x_{+}))+B(x_{+},\u(x_{-}))}{2}
-\frac{B(x_{-},\u(x_{+}))+B(x_{-},\u(x_{-}))}{2}\,;
\end{align*}
in particular, for every $x\in J_\u\setminus \mathcal N$ we have
\begin{align*}
B(x_+,\u(x_{+}))-B(x_-,\u(x_{-}))
 & = B(x,\u(x_{+}))-B(x,\u(x_{-}))
 \\ & =
B^{*}(x,\u(x_{+}))-B^{*}(x,\u(x_{-}))\,,
\end{align*}
and
for every $x\in \mathcal N\setminus J_\u$ we have
\begin{align*}
& B(x_+,\u(x_{+}))-B(x_-,\u(x_{-}))
 = B(x_+,\u(x))-B(x_-,\u(x))
 \\ & =
 \frac{B(x_{+},\u(x_{+}))+B(x_{+},\u(x_{-}))}{2}
-\frac{B(x_{-},\u(x_{+}))+B(x_{-},\u(x_{-}))}{2}\,.
\end{align*}
\end{remark}

%%%%%%%%%%%%%%%%%%%%%%%%%%%%%%%% inizio dimostrazone
\bigskip\noindent
{\bf Proof of Theorem \ref{chain}}\par

Since the proof of Theorem~\ref{chain} is rather long,
it will be convenient to divide it into several steps.

In Step~1, following the regularization argument of
Ambrosio--Dal Maso (see \cite{adm}),
we consider the mollification $B_{\varepsilon}(x, \w)$ of
$B(x, \w)$ with respect to the first variable.
We observe that, for every test function $\phi\in C^1_{c}(]a,b[)$,
the integral
\[
\int_{]a,b[} \phi'(x) B_{\varepsilon}(x, \u(x))\, dx
\] 
converges to the left-hand side of (\ref{f:intro})
as $\varepsilon\to 0^+$ 
(see (\ref{dim555})).
Then, for $\varepsilon$ small enough, 
we decompose this
integral (using the chain rule formula for $C^1$ functions)
as 
\[
\begin{split}
-\int_{]a,b[} \phi'(x) B_{\varepsilon}(x, \u(x))\, dx
& = \int_{]a,b[} \phi(x)\, D_{\w} B_{\varepsilon}(x, \u(x))\, d \tilde{D}\u(x)
\\ & +
\sum_{x\in J_{\u}} \phi(x)\, \left[
B_{\varepsilon}(x, \u(x_+)) - B_{\varepsilon}(x ,\u(x_-))\right]
\\ & +
\int_{]a,b[} \phi(x)\, (D_x B_{\varepsilon}) (x, \u(x))\, dx
=: D_{\varepsilon} + J_{\varepsilon} + I_{\varepsilon}\,, 
\end{split}
\]
and we study the convergence of each one of the
three terms 
$D_{\varepsilon}$, $J_{\varepsilon}$, $I_{\varepsilon}$
appearing at the right-hand side as $\varepsilon\to 0^+$.

The limits of $D_{\varepsilon}$ and $J_{\varepsilon}$
are computed respectively in Steps~2 and~3
following the lines of \cite{dcfv2}.

The limit of $I_{\varepsilon}$ is far more difficult to analyze,
because of the possible interaction between the jump set of 
$\u$ and the jump set of $B(\cdot,\u)$.
In Step~4 we compute this limit in the special case of
$\u$ piecewise constant.
Finally, the general case is proved in Step~5
relying on a carefully chosen approximation of a BV function by means of
piecewise constant functions, 
whose construction has been shown in Lemma~\ref{l:approx}.

\medskip\noindent
{\sc Step 1.} \ Fix $\phi\in C_{c}^{1}(]a,b[)$ and let
$\varphi_{\varepsilon}=\varphi_{\varepsilon}\left(  x\right)  $ be a standard
family of mollifiers.  
Let us define
\[
B_{\varepsilon }\left(  x, \bf{w}\right)  :=
\int_{]a,b[}\varphi_{\varepsilon}\left(  x-y\right)  \,B\left(  y, \w\right)\,dy
\]
for $x\in]a',b'[$ and $ \w\in\R^{d},$ where
$  {supp}\  \phi\subset[a',b']\subset]a,b[\,,$
and $0<\varepsilon<  min\{b-b',a-a'\}$\,.

We claim that $B_{\varepsilon}\in C^{1}(]a',b'[\times \R^{d})$.
Firstly we prove that $D_x B_{\varepsilon}$ is locally Lipschitz continuous
in $]a',b'[\times \R^{d}$. In fact, by hypothesis (A3) for every compact set
$D\subseteq ]a',b'[\times \R^{d}$ and
for every $(x_1,\w_1), (x_2,\w_2)\in D$  there exists a constant $C_D$ such that
\[
\begin{split}
|(D_xB_\eps) (x_1,\w_1) & -(D_xB_\eps) (x_2,\w_2)|
\\ = {} &
\Big|\int_{]a,b[}\big[\varphi'_{\varepsilon}(x_1-y)B(y,\w_1)-\varphi'_{\varepsilon}(x_2-y)B(y,\w_2)\big]\,dy\Big|
\\ \leq {} &
\Big|\int_{]a,b[}\big[\varphi'_{\varepsilon}(x_1-y)-\varphi'_{\varepsilon}(x_2-y)\big]B (y,\w_1)\,dy\Big|
\\ & +
\Big|\int_{ ]a,b[}\varphi'_{\varepsilon}(x_2-y)\big[B (y,\w_1)-B(y,\w_2)\big]\,dy\Big|
\\ \leq {} & 
\frac{C_D}{\eps} \big(|x_1-x_2|+|\w_1-\w_2|\big)\,.
\end{split}
\]
Moreover, we prove that
$D_{\w} B_{\varepsilon}$ is continuous in $]a',b'[\times \R^{d}$.
In fact, for every sequence $(x_n,\w_n)$ converging to $(x,\w)$ in $]a',b'[\times \R^{d}$ we have
\[
\begin{split}
|(D_\w B_\eps) (x_n,\w_n) & -(D_\w B_\eps) (x,\w)|
\\ = {} &
\Big|\int_{]a,b[}\big[\varphi_{\varepsilon}(x_n-y)(D_\w B)(y,\w_n)-\varphi_{\varepsilon}(x-y)(D_\w B)(y,\w)\big]\,dy\Big|
\\ \leq {} &
\int_{ ]a,b[}\big|\varphi_{\varepsilon}(x_n-y)-\varphi_{\varepsilon}(x-y)\big|\,\big|(D_\w B)(y,\w_n)\big|\,dy
\\ & +
\int_{]a,b[}\big|\varphi_{\varepsilon}(x-y)\big|\,\big|(D_\w B)(y,\w_n)-(D_\w B)(y,\w)\big|\,dy\,.
\end{split}
\]
The first integral tends to 0, as $n\to\infty$, since by (A3)
\[
\int_{ ]a,b[}\big|\varphi_{\varepsilon}(x_n-y)-\varphi_{\varepsilon}(x-y)\big|\,\big|(D_\w B)(y,\w_n)\big|\,dy
\leq \frac{C}{\eps} D_M |x_n-x|\,,
\]
and the second one  tends to 0, as $n\to\infty$, by the continuity of the function $(D_{\w} B)(y,\cdot)$ for a.e. $y\in]a,b[$, the boundedness of $D_{\w} B$ and
by the Lebesgue dominated convergence theorem.

Let $\u\in BV(]a,b[;\R^{d})$ and define
\[
v_{\varepsilon}\left(  x\right)  :=B_{\varepsilon}(x,\u(x)),\quad\quad x\in ]a',b'[\,.
\]
Since $B_{\varepsilon}\in C^{1}(]a',b'[\times \R^{d})$
we can apply the chain rule formula (see Theorem 3.96 in \cite{AFP}) to the composition of the function
$B_{\varepsilon}$ with the $BV$ map $x\mapsto (x,\u(x))$,
concluding that $v_{\varepsilon}\in BV(]a',b'[)$ and
\begin{equation}\label{chainrule-eps}
\begin{split}
\int_{]a',b'[} & \phi'(x) v_{\varepsilon}(x)\ dx
= 
- \int_{]a',b'[} \phi(x)\,(D_x B_{\varepsilon})\big(x,\u(x))\ dx
\\ & -
\int_{]a',b'[}\phi(x)(D_{\u}B_{\varepsilon})(x,\u(x))\,\cdot d\widetilde D\u(x)
\\ & -
\sum_{x\in J_\u\cap ]a',b'[}\!\!\phi(x)
\big(B_{\varepsilon}(x,\u(x_{+}))-B_{\varepsilon}(x,\u(x_{-}))\big)
\\ = {} &
- \int_{]a',b'[} \phi(x)\,(D_x B_{\varepsilon})\big(x,\u(x))\ dx
\\ & -
\sum_{i=1}^{d}\int_{]a',b'[}\phi(x)(D_{w^{i}}B_{\varepsilon})(x,
\u(x)) d\widetilde Du^{i}(x)
\\ & -
\sum_{i=1}^{d}\sum_{x\in J_{u^{i}}\cap ]a',b'[}\!\!\phi(x)\big(u^{i}(x_{+})-u^{i}(x_{-})\big)
\int_{0}^{1}(D_{w^{i}}B_{\varepsilon})\big(x,\w_s(x))\big)\ ds\,,
\end{split}
\end{equation}
where $\widetilde D\u$ and $\widetilde Du^{i}$ denote the diffuse parts of the measures ${D\u}$
and ${Du^{i}}$ respectively,
and $\w_s(x):=\u(x_{-})+s(\u(x_{+})-\u(x_{-}))$.

Since $B$ is locally bounded and the functions $B_\eps(\cdot,\w)$ converge a.e. in
$]a',b'[$ to $B(\cdot,\w)$, by Lebesgue dominated convergence theorem we get
\begin{equation}\label{dim555}
\lim_{\varepsilon\rightarrow0^{+}}\int_{]a,b[}\phi'(x)
B_{\varepsilon }\left(x,\u(x)\right)  dx=\int_{]a,b[}\phi'(x)
B_{ }\left(x,\u(x)\right)  dx\,.
\end{equation}

\medskip\noindent
{\sc Step 2.} \ We shall prove  the convergence of the diffuse part, i.e. for every $i=1,\dots,d$ we prove that
\begin{equation}\label{dim55}
\lim_{\varepsilon\rightarrow0^{+}}
\int_{]a',b'[}\phi(x)(D_{w^{i}}B_{\varepsilon})(x,\u(x)) d\widetilde Du^{i}=
\int_{]a,b[}\phi(x)(D_{w^{i}}B)(x,\u(x)) d\widetilde Du^{i}\,.
\end{equation}
Using the coarea formula (\ref{coarea}), we get
\begin{eqnarray}\label{perpar7}
\qquad
&&\int_{]a',b'[}\phi(x)
(D_{w^{i}}B_{\varepsilon})(x, u^{1}(x),\dots, u^{i}(x),\dots, u^{d}(x))\, d\widetilde Du^{i}\\
&=&
\int_{ ]a',b'[\cap C_{u^{i}}}\phi(x)(D_{w^{i}}B_{\varepsilon})(x, u^{1}(x),\dots,u^{i}(x),\dots,u^{d}(x)) \frac{\widetilde Du^{i}}{{|Du^{i}|}}(x)\,d|Du^{i}|\nonumber \\
&=&
\int_{-\infty}^{+\infty}\!dt\!\int_{\{u^i_{-}\leq t\leq u^i_{+}\}\cap C_{u^{i}}}\!\phi(x)
(D_{w^{i}}B_\eps)(x,{\u}(x))\frac{\widetilde Du^{i}}{{|Du^{i}|}}(x)\,d\ho \nonumber \\
&=&
\int_{-\infty}^{+\infty}\!dt\!\int_{\{{u^{i}}=t\}\cap C_{u^{i}}}
\phi(x)(D_{w^{i}}B_\eps)(x,u^{1}(x),\dots,t,\dots,u^{d}(x))
\frac{\widetilde Du^{i}}{{|Du^{i}|}}(x)\, d\ho\,. \nonumber
\end{eqnarray}
Now, by (A4) we have that
for every $i=1,\dots,d$  and for every $\w\in\R^{d}$
\begin{equation}\label{dim74}
(D_{w_{i}}B_\eps)(x,\w)\to (D_{w_{i}}B)^*(x,\w)\qquad \forall x\in]a,b[
\end{equation}
as $\eps\to0$.
Therefore,
for a.e. $t\in\R$, we have
\[
\begin{split}
\lim_{\eps\to0} & \int_{\{{u^{i}}=t\}\cap C_{u^{i}}}
\phi(x)(D_{w^{i}}B_\eps)(x,u^{1},\dots,t,\dots,u^{d})
\frac{\widetilde Du^{i}}{{|Du^{i}|}}\, d\ho
\\ & =
\int_{\{{u^{i}}=t\}\cap C_{u^{i}}}
\phi(x)(D_{w^{i}}B)^{*}(x,u^{1},\dots,t,\dots,u^{d})
\frac{\widetilde Du^{i}}{{|Du^{i}|}}\, d\ho\,.
\end{split}
\]
{}From this equation, using the local boundedness of $(D_{w_{i}}B)^*$ and the fact
that, by the coarea formula (\ref{coarea}),
$$
\int_{-\infty}^{+\infty}
\ho\left(\{{u^{i}}=t\}\cap C_{u^i}\right)dt=
|Du^i|(C_{u^{i}})<\infty\,,
$$
we can pass to the limit in (\ref{perpar7}) and by Lebesgue
dominated convergence theorem we get
\[
\begin{split}
\lim_{\eps\to 0} &
\int_{]a',b'[}\phi(x)
(D_{w^{i}}B_{\varepsilon})(x,u^{1}(x),\dots,u^{i}(x),\dots,u^{d}(x)) d\widetilde Du^{i}
\\ & =
\int_{-\infty}^{+\infty}\!dt
\int_{\{{u^{i}}=t\}\cap C_{u^{i}}}
\phi(x)(D_{u^{i}}B)^{*}(x,u^{1},\dots,t,\dots,u^{d})
\, d \widetilde Du^{i}\,.
\end{split}
\]
{}From this equation, using the coarea formula (\ref{coarea}) again, we
immediately get (\ref{dim55}).

\medskip\noindent
{\sc Step 3.} \ We shall prove  the convergence of the jump part, i.e. for every $i=1,\dots,d$ we prove that
\begin{equation}\label{dim5}
\begin{split}
& \lim_{\varepsilon\rightarrow0^{+}}
\sum_{x\in J_{u^{i}}\cap ]a',b'[}\!\!\phi(x)\big(u^{i}(x_{+})-u^{i}(x_{-})\big)
\int_{0}^{1}(D_{w^{i}}B_{\varepsilon})\big(x,\w_s(x))\big)\, ds
\\ & =
\sum_{x\in J_{u^{i}}\cap ]a',b'[}\!\!\phi(x)\big(u^{i}(x_{+})-u^{i}(x_{-})\big)
\int_{0}^{1}(D_{w^{i}}B)^*\big(x,\w_s(x))\big)\, ds\,,
\end{split}
\end{equation}
where $\w_s(x):=\u(x_{-})+s(\u(x_{+})-\u(x_{-}))$.
Let us fix $i\in\{1,\dots,d\}$, 
and let $J_{u^{i}}' := J_{u^{i}}\cap ]a',b'[ = \{y^j\}_{j\in\bbbn}$. 
For every $h\in\bbbn$ there exists
$k(h)\in\bbbn$ such that
\[
\sum_{j=k(h)+1}^{\infty}|u^{i}(y^j_{+})-u^{i}(y^j_-)|<
\frac{1}{h}.
\]
Then the following estimate holds:
\begin{align*}
& \left|\sum_{x\in J_{u^{i}}'}\phi(x)(u^{i}(x_{+})-u^{i}(x_{-}))
\int_{0}^{1}\Big((D_{w^{i}}B_{\varepsilon})(x,\w_s(x))-
(D_{w^{i}}B)^*(x,\w_s(x))\Big)ds\right|
\\ & \leq
\|\phi\|_\infty\sum_{j=1}^{k(h)}\big| u^{i}(y^j_{+})-u^{i}(y^j_{-})\big|\!
\int_{0}^{1}\Big|(D_{w^{i}}B_{\varepsilon})(y^j,\w_s(y^j))-
(D_{w^{i}}B)^*(y^j,\w_s(y^j)) \Big|ds
\\ & +
\|\phi\|_\infty\sum_{j>k(h)}\big| u^{i}(y^j_{+})-u^{i}(y^j_{-})\big|\!
\int_{0}^{1}\Big|(D_{w^{i}}B_{\eps})(y^j,\w_s(y^j))-
(D_{w^{i}}B)^*(y^j,\w_s(y^j)) \Big| ds
\\ & \leq
\|\phi\|_\infty
\int_{0}^{1}\sum_{j=1}^{k(h)}\big| u^{i}(y^j_{+})-u^{i}(y^j_{-})\big|\!\Big|(D_{w^{i}}B_{\varepsilon})(y^j,\w_s(y^j))-
(D_{w^{i}}B)^*(y^j,\w_s(y^j)) \Big|ds
\\ & +
2C\|\phi\|_\infty\sum_{j>k(h)}\big| u^{i}(y^j_{+})-u^{i}(y^j_{-})\big|\, ,
\end{align*}
where $C :=\|D_{w^{i}}B\|_{L_\infty(]a',b'[\times ]-M,M[)}$ and $\|u^i\|_\infty\leq M$.
By Lebesgue dominated convergence theorem, the first integral is infinitesimal as $\eps\to0$, since $(D_{w^{i}}B_\eps)(x,\w)$ and $(D_{w^{i}}B)^*(x,\w)$ are locally bounded functions and for every $x\in]a',b'[$ and
$\w\in\R^d$ we have that $(D_{w^{i}}B_\eps)(x,\w)\to (D_{w^{i}}B)^*(x,\w)$, as $\eps\to 0$.
Therefore, letting
first $\eps$ tend to zero and then $h$ tend to $\infty$,
we immediately obtain (\ref{dim5}).

%%%%%%%%%%%%%%%%%%%%%%%%%%%%%%%%%%%%%%%%%%%%%%%%%%%%%%%%%%%%%%%%%%%%%%%%%%%%%
\medskip\noindent
{\sc Step 4.}
In this step, we consider a piecewise constant function  $\u\colon ]a,b[\to\R^d$ of the form
\[
\u(x) = \sum_{i=0}^N \v^i \Xi(x),
\]
where $\v^0,\ldots, \v^N\in\R^d$,
$a = a_0 < a_1 < \ldots < a_N < a_{N+1} = b$ and we prove that
\begin{equation}\label{nnnn}
\begin{split}
{} &  \lim_{\varepsilon\to 0}\int_{]a',b'[} \phi(x)\, (D_{x}B_{\varepsilon})(x,\u(x))\, dx
\\ & =
\int_{]a',b'[}  \phi(x)\, (\nabla_x B)(x,\u(x))\,dx
+ \int_{]a',b'[}  \phi(x)\,  
\frac{d\,(D^{c}_x B)(\cdot,\u)}{d\,\lambda}d\,\lambda
\\ & +
\sum_{x\in \mathcal N\cup J_{\u}}
\phi(x)\left[
\frac{B(x_+, \u(x_+)) + B(x_+, \u(x_-))}{2} - \frac{B(x_-, \u(x_+) + B(x_-, \u(x_-))}{2}
\right]
\,.
\end{split}
\end{equation}
In order to simplify the notation, let us denote by $\chi_i$
the characteristic function $\chi_{[a_i, a_{i+1}]}$.
{}From the very definition of $\u$
and (\ref{f:Leib}), we have that
\[
\begin{split}
I_{\eps} := {} &
\int_{]a',b'[} \phi(x)\, (D_{x}B_{\varepsilon})(x,\u(x))\, dx
\\ = {} &
\sum_{i=0}^N \int_{]a',b'[} \phi(x)\, \chi_i^*(x)\, (D_{x}B_{\varepsilon})(x,\v^i)\, dx
\\ = {} &
- \sum_{i=0}^N \int_{]a',b'[} B_{\varepsilon}(x,\v^i)\, dD(\phi\chi_i)
\\ = {} &
- \sum_{i=0}^N \int_{]a',b'[} \phi'(x)\, \chi_i(x)\, B_{\varepsilon}(x,\v^i)\, dx
\\ & + \sum_{i=0}^N \left[\phi(a_{i+1})\, B_{\varepsilon}(a_{i+1},\v^i)
- \phi(a_{i})\, B_{\varepsilon}(a_{i},\v^i)\right]\,.
\end{split}
\]
Passing to the limit as $\eps \to 0$ we obtain
\begin{equation}\label{f:eqlim}
\begin{split}
\lim_{\eps\to 0} I_{\eps} = {} &
- \sum_{i=0}^N \int_{]a,b[} \phi'(x)\, \chi_i(x)\, B(x,\v^i)\, dx
\\ & +
\sum_{i=0}^N \left[\phi(a_{i+1})\, B^*(a_{i+1},\v^i)
- \phi(a_{i})\, B^*(a_{i},\v^i)\right]\,.
\end{split}
\end{equation}
Let us consider the integrals at the right-hand side of (\ref{f:eqlim}).
Using again (\ref{f:Leib}) we have that
\[
\begin{split}
-\int_{]a,b[} & \phi'(x)\, \chi_i(x)\, B(x,\v^i)\, dx
= 
\int_{]a,b[} \phi(x)\, dD(\chi_i\, B(\cdot,\v^i))
\\ = {} &
\int_{]a,b[} \phi(x)\, \chi_i^*(x)\, d (D_xB)(\cdot,\v^i)
+ \int_{]a,b[} \phi(x)\, B^*(x,\v^i)\, dD\chi_i
\\ = {} &
\int_{]a,b[} \phi(x)\, \chi_i^*(x)\, d (D_xB)(\cdot,\v^i)
+
\phi(a_{i})\, B^*(a_{i},\v^i) - \phi(a_{i+1})\, B^*(a_{i+1},\v^i).
\end{split}
\]
Substituting this expression into (\ref{f:eqlim})
we thus obtain
\[
I := \lim_{\eps\to 0} I_{\eps} =
\sum_{i=0}^N\int_{]a,b[} \phi(x)\, \chi_i^*(x)\, d (D_xB)(\cdot,\v^i)\,.
\]
Finally, let us decompose each measure $(D_x B)(\cdot, \v^i)$ in the
canonical way (\ref{dec}).
It is not difficult to check that
\begin{equation}\label{asda}
\begin{split}
I
= {} &
\sum_{i=0}^N\int_{]a,b[} \phi(x)\, \chi_i^*(x)\, \nabla_x B(x,\v^i)\, dx
+ \sum_{i=0}^N\int_{]a,b[} \phi(x)\, \chi_i^*(x)\, 
\frac{d D_x^c B(\cdot,\v^i)}{d\,\lambda}\,d\,\lambda
\\ & +
\sum_{i=1}^N \phi(a_i)\left[
\frac{B(a_i+, \v^i) + B(a_i+, \v^{i-1})}{2} - \frac{B(a_i-, \v^i) + B(a_i-, \v^{i-1})}{2}
\right]
\\ & +
\sum_{x\in\mathcal{N}\setminus J_{\u}}
\phi(x) \left[B(x_+, \u(x)) - B(x_-, \u(x))\right]\,.
\end{split}
\end{equation}
The first two terms coincide respectively with
\[
\int_{]a,b[} \phi(x)\, \nabla_x B(x,\u(x))\, dx\,, \qquad
\int_{]a,b[} \phi(x)\, \psi(x,\u(x))\, d\lambda\,.
\]
The last two summations take into account the jump points
$x\in J_{\u}$ and $x\in\mathcal{N}\setminus J_{\u}$, respectively.
Again, it is not difficult to check that, 
%for every $x\in\mathcal{N}\cup J_\u$,
in both cases,
the corresponding term can always be written as
\[
\phi(x)\left[
\frac{B(x_+, \u(x_+)) + B(x_+, \u(x_-))}{2} - \frac{B(x_-, \u(x_+) + B(x_-, \u(x_-))}{2}
\right]\,,
\]
so that (\ref{nnnn}) follows.

%%%%%%%%%%%%%%%%%%%%%%%%%%%%%%%%%%%%%%%%%%%%%%%%%%%%%%%%%%%%%%%%%%%%%%%%%%%%%
\medskip\noindent
{\sc Step 5.}
In this step, we shall prove  that formula \eqref{nnnn} holds 
for every function $\u\in BV(]a,b[;\R^{d})$, i.e.\ we prove that
\begin{equation}\label{dim66}
I= \lim_{\varepsilon\rightarrow0^{+}}I_\eps\,,
\end{equation}
where
\begin{equation}\label{dim661}
I_\eps:=\int_{]a',b'[}\phi(x)\,(D_x B_{\varepsilon})\big(x,\u(x))\ dx
\end{equation}
and
\[
\begin{split}
I&:= \int_{]a,b[} \phi(x) (\nabla_x B)(x,\u(x))\, dx+
\int_{]a,b[} \phi(x) \psi(x,\u(x))\, d\lambda
\\ & +
\sum_{x\in \mathcal{N}\cup J_{\u}}
\phi(x)\left[
\frac{B(x_+, \u(x_+)) + B(x_+, \u(x_-))}{2} - \frac{B(x_-, \u(x_+) + B(x_-, \u(x_-))}{2}
\right]\,.
\end{split}
\]
Let $\u\in BV(]a,b[;\R^{d})$
and let $(\u_n)_n$ be the sequence of approximating piecewise constant
functions given by Lemma~\ref{l:approx} with $P = \mathcal{N}\setminus J_{\u}$\,.

Fixed $\varepsilon>0$, we set
\begin{equation}\label{dim6611}
I^n_\eps:=\int_{]a',b'[}\phi(x)\,(D_x B_{\varepsilon})\big(x,\u_n(x))\ dx\,.
\end{equation}
By Lebesgue dominated convergence theorem and the continuity of $(D_x B_{\varepsilon})\big(x,\cdot)$ (which follows by $B_{\varepsilon}\in C^{1}(]a',b'[\times \R^{d})$), for every $\varepsilon>0$ we have that

\begin{equation}\label{dim6n}
I_\eps= \lim_{n\to\infty}I^n_\eps\,.
\end{equation}

More precisely, we claim that
\begin{equation}\label{dim53}
|I^n_\eps-I_\eps|\leq\|\u_n-\u\|_\infty \mu_M(]a,b[)\|\phi\|_\infty\quad\forall\varepsilon>0
\quad {\rm small\ enough}.
\end{equation}
Namely, by hypothesis (A2) we have
\begin{equation}\label{dim533}
\begin{split}
|I^n_\eps-I_\eps|\leq &
\int_{]a,b[}|\phi(x)|\Big[\int_{]a,b[}\varphi_{\varepsilon}(x-y)d\big|(D_xB)(\cdot,\u_n(x))-(D_xB)(\cdot,\u(x))\big|(y)\Big]dx
\\ \leq &
\int_{]a,b[}|\phi(x)||\u_n(x)-\u(x)|\Big[\int_{]a,b[}\varphi_{\varepsilon}(x-y)d\mu_M(y)\Big]dx
\\ \leq &
\|\u_n-\u\|_\infty\|\phi\|_\infty
\int_{]a,b[}\Big[\int_{]a,b[}\varphi_{\varepsilon}(x-y)\,dx\Big]\,d\mu_M(y)
\\ = &
\|\u_n-\u\|_\infty\|\phi\|_\infty\mu_M(]a,b[)\,.
\end{split}
\end{equation}

On the other hand, by Step 4 we have for every $n\in\bbbn$
\begin{equation}\label{dim66666}
\lim_{\varepsilon\rightarrow0^{+}}
I^n_\eps = I^n
\end{equation}
where
\begin{equation}\label{dim666667}
\begin{split}
I^n:=
\int_{]a,b[} \phi(x) (\nabla_x B)(x,\u_{n}(x))\, dx
+
\int_{]a,b[} \phi(x) \psi(x,\u_{n}(x))\, d\lambda+ S_n\,,
\end{split}
\end{equation}
and
\[
\begin{split}
S_n := \sum_{x\in \mathcal{N} \cup J_{\u_n}}
\phi(x)\Big[ &
\frac{B(x_+, \u_n(x_+)) + B(x_+, \u_n(x_-))}{2} +
\\ &  - \frac{B(x_-, \u_n(x_+) + B(x_-, \u_n(x_-))}{2}
\Big]\,.
\end{split}
\]

We claim that
\begin{equation}\label{dim60}
I= \lim_{n\to\infty}I^n\,.
\end{equation}
By Remark \ref{ipo} we have that
$$
\lim_{n\to\infty}\int_{]a,b[} \phi(x) (\nabla_x B)(x,\u_{n}(x))\, dx=\int_{]a,b[} \phi(x) (\nabla_x B)(x,\u(x))\, dx
$$
and
$$
\lim_{n\to\infty}\int_{]a,b[} \phi(x) \psi(x,\u_{n}(x))\, d\lambda=\int_{]a,b[} \phi(x) \psi(x,\u(x))\, d\lambda\,.
$$
It remains to show that $S_n$
converges, as $n\to +\infty$, to
\[
S := \sum_{x\in \mathcal{N} \cup J_{\u}}
\phi(x)\left[
\frac{B(x_+, \u(x_+)) + B(x_+, \u(x_-))}{2} - \frac{B(x_-, \u(x_+) + B(x_-, \u(x_-))}{2}
\right]\,.
\]
Let $I_1 = \bigcup_n J_{\u_n}$, $I_2 = \mathcal{N}\setminus I_1$ and $I = I_1 \cup I_2$\,.
We recall that $P = \mathcal{N}\setminus J_{\u}$ and $J_\u\subset I_1$\,.
Since, by construction, $J_{\u_n}\cap P = \emptyset$ for every $n\in\bbbn$, we have that $I_1 \cap P = \emptyset$
and $\mathcal{N}\cup J_{\u}\subset I$.
Hence both summations in $S_n$ and $S$ can be extended to the bigger set $I = \{x_i\}$, since it is easy to check that the
added terms are all zero.
Thus we can write
\[
S_n = \sum_{i\in\bbbn} a^n_i,\quad
S = \sum_{i\in\bbbn} a_i,
\]
where
\[
\begin{split}
a^n_i & :=
\phi(x^i)\left[
\frac{B(x^i_+, \u_n(x^i_+)) + B(x^i_+, \u_n(x^i_-))}{2} - \frac{B(x^i_-, \u_n(x^i_+) + B(x^i_-, \u_n(x^i_-))}{2}
\right],\\
a_i & :=
\phi(x^i)\left[
\frac{B(x^i_+, \u(x^i_+)) + B(x^i_+, \u(x^i_-))}{2} - \frac{B(x^i_-, \u(x^i_+) + B(x^i_-, \u(x^i_-))}{2}
\right].
\end{split}
\]
Let $R \geq \max\{\|\u_n\|_{\infty}, \|\u\|_{\infty}\}$
and let $M = \overline{B}_R(0)$.
{}From assumption (A2) we have that
\[
|B(x_+, \w) - B(x_-, \w)| \leq |B(x_+, 0) - B(x_-, 0)| + R \mu_M(\{x\}),
\qquad x\in ]a,b[,\ \w\in M\,.
\]
Since
\[
|a^n_i| \leq b_i:= \|\phi\|_{\infty}\left[|B(x_+^i,0)-B(x_-^i,0)| + R\mu_M(\{x_i\})\right],
\qquad \forall n\in\bbbn\,,
\]
and
\[
\sum_{i\in\N}b_i \leq \|\phi\|_{\infty} \left[
TV(B(\cdot, 0)) + R\mu_M(]a,b[)\right],
\qquad \forall n\in\bbbn\,,
\]
in order to prove that $\lim_n S_n = S$, by Dominated convergence theorem,
it is enough to prove that $\lim_n a^n_i = a_i$ for every $i\in\bbbn$.

We have three cases.
If $x_i\in P = \mathcal{N}\setminus J_{\u}$, then $\u$ and every $\u_n$ are continuous at $x_i$.
Moreover, for every $n$ large enough, $\u_n(x_i) = \u(x_i)$, hence
$a^n_i = a_i$.
If $x_i\in J_{\u}$, then for every $n$ large enough we have that
$\u_n(x_i+) = \u(x_i+)$, $\u_n(x_i-) = \u(x_i-)$, hence again $a^n_i = a_i$.
Finally, let us consider the case $x_i\in I_1\setminus J_{\u}$.
Since $x_i\not\in\mathcal{N}$, the function $B(x_i, \cdot)$ is continuous in $\R^d$.
Moreover, since $x_i\not\in J_{\u}$, also $\u$ is continuous at $x_i$ and
$\u_n(x_i+), \u_n(x_i-)\to \u(x_i)$, so that $\lim_n a^n_i = a_i$. Therefore \eqref{dim60} is proved.

In order to prove \eqref{dim66}, let us fix $\eta>0$. By \eqref{dim53} and by \eqref{dim60} there exists $n_0\in\N$ such that
$$
|I^{n_0}_\eps-I_\eps|<\frac{\eta}{3}\quad\quad\forall\varepsilon>0
\quad {\rm small\ enough}
$$
and
$$
|I^{n_0}-I|<\frac{\eta}{3}\,.
$$
Moreover by \eqref{dim66666} there exists $\varepsilon_0>0$ such that
$$
|I^{n_0}_\eps-I^{n_0}|<\frac{\eta}{3}\quad\quad\forall0<\varepsilon<\varepsilon_0\,.
$$
Then
$$
|I_\eps-I|\leq |I_\eps-I^{n_0}_\eps|+|I^{n_0}_\eps-I^{n_0}|+|I^{n_0}-I|<\eta
\quad\quad\forall0<\varepsilon<\varepsilon_0\,.
$$
Therefore  \eqref{dim66} is proved and this concludes Step 5.

Finally, the thesis of the theorem is obtained by collecting all the Steps.
\qquad\endproof

%%%%%%%%%%%%%%%%%%%%%%%%%%%%%%%%%%%%%%%%%%%%%%%%%%%%%%%%%%%%%%%%%%%%%%%%%%%%

\bigskip

In view to the applications to conservation laws (see Proposition \ref{bcbc}) we need to generalize formula (\ref{f:int}) in order to integrate a $BV$ function with respect to the measure $(B(x,\u(x)))_x$\,.

\begin{corollary}\label{chain222}
Let $B:]a,b[\times\R\to\R$ be a function satisfying the same assumptions of Theorem \ref{chain}. Let $g:]a,b[\to \R$ be a $BV$-function such that $J_g\subseteq {\mathcal N}$.

Then for every $ \u\in BV(]a,b[;\R^{d})$ and 
$\phi\in C_0^1(]a,b[)$ we have
\begin{equation}\label{f:int222}
\begin{split}
\int_{]a,b[} \phi(x) g^*(x) & \,d\,(B(x,\u(x)))_x = %{} &
\int_{]a,b[} \phi(x) g(x) (\nabla_x B)(x,\u(x))\, dx
\\ & +
\int_{]a,b[} \phi(x) g(x) \psi(x,\u(x))\, d\lambda
\\ & +
\int_{]a,b[} \!\phi(x) g(x) (D_{\w}B)(x,\u(x))\cdot\nabla \u(x)\,dx
\\ & +
\int_{]a,b[}\phi(x) g(x) ({D_{\w}B})(x,\u(x))\,\cdot dD^c\u(x)
\\ & +
\sum_{x\in  \mathcal N\cup J_\u}\!\!\phi(x) g^*(x)
\left[B(x_{+},\u(x_{+}))-B(x_{-},\u(x_{-}))\right]\,.
\end{split}
\end{equation}
\end{corollary}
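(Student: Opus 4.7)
The strategy is to regard Theorem~\ref{chain} as an \emph{identification} of the Radon measure $Dv$ associated with $v(x):=B(x,\u(x))$, and then to integrate the bounded Borel function $\phi g^{*}$ against this measure, splitting according to the Lebesgue decomposition of $Dv$.

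Since Theorem~\ref{chain} already yields $v\in BV(]a,b[)$, one has $\int_{]a,b[}\phi'v\,dx=-\int_{]a,b[}\phi\,dDv$ for every $\phi\in C_{0}^{1}(]a,b[)$. Comparing with (\ref{f:int}) and invoking the fact that two Radon measures agreeing when tested against $C_{0}^{1}$ must coincide, one reads off the decomposition of $Dv=(B(x,\u(x)))_{x}$ as the sum of: (a) the absolutely continuous part with density $(\nabla_{x}B)(x,\u)+(D_{\w}B)(x,\u)\cdot\nabla\u$; (b) the Cantor-type part $\psi(x,\u)\,d\lambda+(D_{\w}B)(x,\u)\cdot dD^{c}\u$; (c) the atomic part $\sum_{x\in\mathcal{N}\cup J_{\u}}[B(x_{+},\u(x_{+}))-B(x_{-},\u(x_{-}))]\,\delta_{x}$.

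Because $g\in BV(]a,b[)\subset L^{\infty}$ and its precise representative $g^{*}$ is Borel, the integral $\int_{]a,b[}\phi g^{*}\,dDv$ is well defined and splits linearly along (a), (b), (c). In each of the four diffuse pieces one may freely replace $g^{*}$ by $g$, since they differ only on the countable set $J_{g}$ and each of the measures $dx$, $\lambda$, $|D^{c}\u|$ is diffuse: the Lebesgue measure has no atoms, $\lambda$ is a Cantor measure by assumption (A5), and $D^{c}\u$ is diffuse by the very definition of Cantor part. The atomic piece, concentrated on $\mathcal{N}\cup J_{\u}$, retains $g^{*}$ because the Dirac masses are genuinely supported at points of $J_{g}\subseteq \mathcal{N}$ where $g$ and $g^{*}$ may differ. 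Collecting the five contributions produces exactly (\ref{f:int222}).

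The only delicate point is the preliminary measure-theoretic identification of $Dv$ from the integral formula (\ref{f:int}), which is immediate from the density of $C_{0}^{1}$ in $C_{0}$ together with the boundedness of $Dv$. The remainder is routine bookkeeping of diffuse versus atomic parts, and no new approximation argument is required beyond those already developed in the proof of Theorem~\ref{chain}; in particular the hypothesis $J_{g}\subseteq\mathcal{N}$ is used precisely to ensure that $g^{*}$ is unambiguously defined at every atom of $Dv$ and that no additional jump terms are introduced.
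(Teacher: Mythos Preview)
Your proof is correct and takes a somewhat different route from the paper's. The paper mollifies $g$ to obtain $g_\eps = g * \varphi_\eps \in C^\infty$, applies Theorem~\ref{chain} with the $C_0^1$ test function $\phi\, g_\eps$, and then passes to the limit $\eps \to 0$ using that $g_\eps(x) \to g^*(x)$ for every $x$ together with dominated convergence in each of the five terms (and on the left-hand side, where $\int (\phi g_\eps)' v\,dx = -\int \phi g_\eps\,dDv$). Your argument instead reads (\ref{f:int}) directly as an identification of the Radon measure $Dv = (B(x,\u(x)))_x$ (via the density of $C_0^1$ in $C_0$ and the fact that the five pieces on the right are finite measures, the atomic one because $v\in BV$ forces $\sum|v(x_+)-v(x_-)|<\infty$), and then integrates the bounded Borel function $\phi g^*$ against that measure, replacing $g^*$ by $g$ in the four diffuse pieces. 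Both routes are short and elementary; yours is arguably cleaner in that it dispenses with the approximation step entirely, while the paper's has the virtue of never leaving the $C_0^1$ framework of Theorem~\ref{chain}.

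One small remark: your last sentence slightly mis-states the role of the hypothesis $J_g \subseteq \mathcal{N}$. In fact neither your argument nor the paper's actually uses it: the atoms of $Dv$ already lie in $\mathcal{N}\cup J_\u$ regardless of where $g$ jumps, and $g^*$ is well defined everywhere. The hypothesis appears to be recorded with the subsequent application in Proposition~\ref{bcbc} in mind rather than being essential for the corollary itself.
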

\begin{proof} 
Let $g_\eps=g*\varphi_\eps$ be the standard mollified functions of the $BV$-function $g$. We recall that $g_\eps$ pointwise converges (everywhere) in $]a,b[$ to the precise representative $g^*$, as $\eps\to 0$\,. We apply Theorem \ref{chain} by using $\phi(x)g_\eps(x)$ as test function. The conclusion follows by Lebesgue dominated convergence Theorem.
\end{proof}

\bigskip
In the next corollaries we consider  $B(x,\w)$ with a particular structure.

\begin{corollary}\label{chain2}
Let $K\in BV(]a,b[)$ and $f\in C^{1}(\R^{d})$\,.
Then for every $ \u\in BV(]a,b[;\R^{d})$ the function
$v:]a,b[\rightarrow\R$, defined by
\[
v(x):=K(x)f\left(\u(x)\right),\quad x\in]a,b[,
\]
belongs to $BV(]a,b[)$,
and for any
$\phi\in C_0^1(]a,b[)$ we have
\begin{equation}\label{f:int2}
\begin{split}
\int_{]a,b[}\phi'(x)  v(x)\,dx = {} &
-\int_{]a,b[} \phi(x)(f(\u))^{*}(x)\,dD K(x)
\\ & -
\int_{]a,b[}\phi(x)K(x)(\nabla f)(\u(x))\cdot\nabla \u(x)\,dx
\\ & -
\int_{]a,b[}\phi(x)K(x)(\nabla f)(\u(x))\,\cdot dD^c\u(x)
\\ & -
\sum_{x\in J_\u}\!\!\phi(x)
K^*(x)\left[f(\u(x_{+}))-f(\u(x_{-}))\right],
\end{split}
\end{equation}
where $(f(\u))^{*}$ and $K^*$ are the precise representatives
of the $BV$ functions $f(\u)$ and $K$ respectively.
\end{corollary}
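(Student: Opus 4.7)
\noindent\textbf{Proof plan for Corollary~\ref{chain2}.}

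The plan is to deduce the corollary as a direct specialization of Theorem~\ref{chain} applied to $B(x,\w):=K(x)f(\w)$. I would first check that this $B$ satisfies hypotheses (A1)--(A5). Take $\mathcal N:=J_K$: then $B(\cdot,\w)=f(\w)K$ is BV with jump set contained in $J_K$, giving (A1). Since $(D_xB)(\cdot,\w)=f(\w)DK$, one has $|(D_xB)(\cdot,\w)-(D_xB)(\cdot,\w')|=|f(\w)-f(\w')|\,|DK|\le L_M|\w-\w'|\,|DK|$ on any compact $M\subset\R^d$, where $L_M$ is a Lipschitz constant of $f$ on $M$; hence (A2) holds with $\mu_M:=L_M|DK|$. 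For (A3) and (A4) note $(D_\w B)(x,\w)=K(x)\nabla f(\w)$, which is a product of the bounded BV function $K$ with a $C^1$ function of $\w$. Finally, $(D^c_xB)(\cdot,\w)=f(\w)\,D^cK$, so (A5) is satisfied with $\lambda:=|D^cK|$ (a Cantor measure) and Radon--Nikod\'ym derivative $\psi(x,\w)=f(\w)\,\dfrac{dD^cK}{d|D^cK|}(x)$.

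Next I would substitute these expressions into formula~\eqref{f:int}. The first two lines of \eqref{f:int} become
$$
-\int_{]a,b[}\phi(x)f(\u(x))\,\nabla K(x)\,dx
-\int_{]a,b[}\phi(x)f(\u(x))\,dD^cK(x),
$$
the third and fourth lines produce exactly the two $\nabla f(\u)$--terms appearing in \eqref{f:int2}, and the last sum is
$$
-\sum_{x\in J_K\cup J_\u}\phi(x)\bigl[K(x_+)f(\u(x_+))-K(x_-)f(\u(x_-))\bigr].
$$

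The key algebraic step is the discrete Leibniz identity at each jump point
$$
K(x_+)f(\u(x_+))-K(x_-)f(\u(x_-))
=(f(\u))^{*}(x)\bigl[K(x_+)-K(x_-)\bigr]+K^{*}(x)\bigl[f(\u(x_+))-f(\u(x_-))\bigr],
$$
which one verifies directly (it is trivial when $x\in J_K\setminus J_\u$ or $x\in J_\u\setminus J_K$, and a short computation at mixed points $x\in J_K\cap J_\u$). Using this to split the jump sum, the $(f(\u))^{*}$--part together with the two diffuse integrals above reassembles, via the canonical decomposition $DK=\nabla K\,dx+D^cK+\sum_{x\in J_K}[K(x_+)-K(x_-)]\delta_x$, into $-\int_{]a,b[}\phi(x)(f(\u))^{*}(x)\,dDK(x)$; here I use that $(f(\u))^{*}=f(\u)$ $\mathcal L^1$-a.e.\ and also $D^cK$-a.e., because the Cantor measure $D^cK$ does not charge the countable set $J_\u$. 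The $K^{*}$--part of the jump sum reduces to a sum over $J_\u$, since for $x\in J_K\setminus J_\u$ the factor $f(\u(x_+))-f(\u(x_-))$ vanishes. The BV regularity of $v$ is automatic from Theorem~\ref{chain}.

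I do not expect any serious obstacle: the only point deserving care is the reshuffling at the mixed jump points $x\in J_K\cap J_\u$, where the two ``halves'' of the discrete Leibniz identity must combine correctly with the precise representatives $K^{*}$ and $(f(\u))^{*}$; the rest is bookkeeping.
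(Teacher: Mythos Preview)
Your proposal is correct and follows essentially the same approach as the paper: both apply Theorem~\ref{chain} to $B(x,\w)=K(x)f(\w)$ and verify (A1)--(A5) with the natural choices $\mathcal N=J_K$, $\mu_M=C_M|DK|$, and $\lambda$ the Cantor part of $|DK|$. The paper's own proof is terse and leaves the reassembly of the terms of~\eqref{f:int} into~\eqref{f:int2} to the reader, whereas you spell out the discrete Leibniz identity at jump points and the fact that $(f(\u))^{*}=f(\u)$ both $\mathcal L^1$-a.e.\ and $D^cK$-a.e.; this extra detail is exactly what is needed and involves no new idea.
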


\begin{proof}
It is sufficient to observe that the function $B(x,\w):=K(x)f(\w)$ satisfies all the assumptions of Theorem \ref{chain}\,. For instance, hypothesis (A2) is satisfied since for every compact set $M\subset \R^d$ we can choose $\mu_M:=C_M|DK|$, with $C_M=\max_{w\in M}|\nabla f(w)|$, and hypothesis (A5) is satisfied since  we can choose $\lambda:=|DK|$\,.
\end{proof}

\begin{corollary}\label{chain3}
Let $f:\R\times\R^{d}\rightarrow\R$ and $K:]a,b[\rightarrow\R$ be two  functions satisfying

\begin{enumerate}
\item[(i)] $K\in BV(]a,b[)$;

\item[(ii)] the function $f=f(y,\w)$ belongs to $C^{1}(\R\times\R^{d})$\,;

\item[(iii)]
the function  $x\mapsto f_{\w}(K(x), \w)$ belongs to $BV(]a,b[)$ for every $\w\in\R^{d}$\,;

\item[(iv)]
for every compact set $D\subset\R\times\R^d$ there exists a constant $L_D$ such that
$$
|f_{y}(y, \w)-f_{y}(y, \w')|\leq L_D|\w-\w'|\quad\forall (y,\w),(y,\w')\in D
$$
and
$$
|f_{\w}(y, \w)-f_{\w}(y', \w)|\leq L_D|y-y'|\quad\forall (y,\w),(y',\w)\in D\,.
$$

\end{enumerate}
Then for every $ \u\in BV(]a,b[;\R^{d})$ the function
$v:]a,b[\rightarrow\R$, defined by
\[
v(x):=f\left(K(x), \u(x)\right),\quad x\in]a,b[,
\]
belongs to $BV(]a,b[)$,
and for any
$\phi\in C_0^1(]a,b[)$ we have
\begin{equation}\label{f:int3}
\begin{split}
\int_{]a,b[}\phi'(x)  v(x)\,dx = {}
& -
\int_{]a,b[}\phi(x)f_y(K(x),\u(x))\,\cdot d{\widetilde DK}(x)
\\ & -
\int_{]a,b[}\phi(x)f_\w(K(x),\u(x))\,\cdot d\widetilde D\u(x)
\\ & -
\sum_{x\in J_\u\cup J_K}\!\!\phi(x)
\left[f(K(x_+),\u(x_{+}))-f(K(x_-),\u(x_{-}))\right]\,.
\end{split}
\end{equation}
\end{corollary}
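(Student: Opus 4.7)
The plan is to apply Theorem~\ref{chain} with the choice $B(x,\w):=f(K(x),\w)$ and then reorganise the resulting terms to match (\ref{f:int3}). First I would verify (A1)--(A5). Since $f(\cdot,\w)\in C^{1}(\R)$ and $K\in BV(]a,b[)$, the classical Vol'pert chain rule yields $B(\cdot,\w)\in BV(]a,b[)$ with $J_{B(\cdot,\w)}\subseteq J_K$, so (A1) holds with $\mathcal N:=J_K$. Hypothesis (A3) is immediate: $B(x,\cdot)\in C^{1}(\R^{d})$ by (ii), and for $\w$ in a compact $M\subset\R^{d}$ the pair $(K(x),\w)$ lies in a fixed compact of $\R\times\R^{d}$ (since $K\in BV(]a,b[)\subset L^{\infty}$), so $f_\w(K(x),\w)$ is bounded there. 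Hypothesis (A4) is assumption (iii), while for (A5) Vol'pert gives $D^{c}_{x}B(\cdot,\w)=f_y(\widetilde K(\cdot),\w)\,D^{c}K$, so one takes the Cantor measure $\lambda:=|D^{c}K|$ and obtains $\psi(x,\w)=f_y(\widetilde K(x),\w)\,\tfrac{dD^{c}K}{d|D^{c}K|}(x)$.

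The main step is the verification of (A2). Decomposing $(D_{x}B)(\cdot,\w)-(D_{x}B)(\cdot,\w')$ into its absolutely continuous, Cantor and jump parts via Vol'pert, the first Lipschitz estimate in (iv) controls the ac and Cantor parts by $L_D|\w-\w'|$ times $|\nabla K|\,dx$ and $|D^{c}K|$ respectively. For the jump part, writing
\[
f(K(x_\pm),\w)-f(K(x_\pm),\w')=\int_{0}^{1}f_\w\bigl(K(x_\pm),\w'+t(\w-\w')\bigr)\cdot(\w-\w')\,dt
\]
and then taking the difference between $x_+$ and $x_-$ allows me to invoke the second Lipschitz estimate in (iv) and obtain the bound $L_D|K(x_+)-K(x_-)||\w-\w'|$ on the jump coefficient. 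Collecting the three pieces yields (A2) with $\mu_M:=L_D|DK|$. This bookkeeping, and in particular the mean-value rewriting of the jump part, is the only nonroutine point of the proof.

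Once Theorem~\ref{chain} is applicable, I would simply collect terms. Using $\nabla_{x}B(x,\w)=f_y(K(x),\w)K'(x)$ together with the expression for $\psi$, and noting that $K=\widetilde K$ outside $J_K$ and that $D^{c}K$ is concentrated on $C_K$, the first two integrals on the right-hand side of (\ref{f:int}) merge into $-\int\phi\,f_y(K(x),\u(x))\,d\widetilde{DK}$. Similarly, $(D_{\w}B)(x,\u(x))=f_\w(K(x),\u(x))$ merges the $\nabla\u\,dx$ and $D^{c}\u$ contributions of (\ref{f:int}) into $-\int\phi\,f_\w(K(x),\u(x))\cdot d\widetilde{D\u}$. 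Finally, the jump sum is over $\mathcal N\cup J_\u=J_K\cup J_\u$ and is already in the form required by (\ref{f:int3}), which completes the proof.
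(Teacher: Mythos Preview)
Your proposal is correct and follows exactly the same route as the paper: set $B(x,\w):=f(K(x),\w)$, take $\mathcal N=J_K$, $\lambda=|D^{c}K|$, $\psi(x,\w)=f_y(K(x),\w)\tfrac{dD^{c}K}{d|D^{c}K|}$, and apply Theorem~\ref{chain}. Your write-up is in fact more detailed than the paper's, which does not spell out the verification of (A2) or the final regrouping of terms; your treatment of the jump contribution in (A2) via the mean-value representation and the second Lipschitz bound in (iv) is precisely the missing computation.
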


\begin{proof}
We observe that the function
 $B(x,\w)=f(K(x),\w)$ satisfies all the assumptions of Theorem \ref{chain}\,. In particular, $\mathcal N=J_K$,
 $$
 D^c_xB(\cdot,\w)=f_y(K(x),\w) D^cK\ll|D^cK|=\lambda
 $$
and
 $\psi(x,\w)=f_y(K(x),\w)\frac{D^cK}{|D^cK|}(x)$.
\end{proof}

%%%%%%%%%%%%%%%%%%%%%%%%%%%%%%%%%%%%%%%%%%%%%%%%%
\section{Comparison with other chain rule formulas}

In \cite{dcfv2} it was proved a chain rule formula for function $u:\R^N\to \R$. 
In Theorem 5.1 we recall this formula which coincides to formula (\ref{f:int}) in the case of $d=1=N$.
Although the two formulas look like very different, we explicitely show that they concide for piecewise constant functions.

\begin{theorem}\label{chainB} Let $B:]a,b[\times\R\rightarrow\R$ be a locally
bounded Borel function. Assume that $B(x,0)=0$ for all $x\in ]a,b[$ and

\begin{enumerate}
\item[(i)] for all $t\in\R$ the function $B\left(\cdot,t \right)  \in BV(]a,b[)\,;$
\item[(ii)]  for all $x\in ]a,b[$ the function $B\left(x,\cdot\right)$
belongs to $C^{1}(\R)\,;$
\item[(iii)]
the function $D_{t}B$ is locally bounded, 
for all $t\in\R$ the function $(D_{t}B)\left(\cdot,t\right)$ belongs to $BV(]a,b[)$
and for every compact set $M\subset\R$
\[
\int_M|D_x (D_{t} B)(\cdot,t)|(]a,b[)\ dt<+\infty\,.
\]

\end{enumerate}
Then, for every $u\in BV(]a,b[)$
the composite function 
%$v:]a,b[\rightarrow\R$, defined by
$v(x):=B(x,u(x))$, $x\in ]a,b[$,
belongs to $BV_{loc}(]a,b[)$ and for any
$\phi\in C_0^1(]a,b[)$ we have
\begin{eqnarray}\label{chainrules}
\qquad
\int_{]a,b[}\phi'(x)  v(x)\,dx=
\!\!&-&\!\!\!\!\!\!
\int_{-\infty}^{+\infty}\!\!dt\!\!\int_{]a,b[}{\rm sgn}(t)\chi^*_{\Omega_{u,t}}(x)\phi(x)\,dD_x(D_{t}B)(\cdot,t)\\
\!\!&-&\!\!\!\!\!\!
\!\int_{]a,b[}\!\phi(x)(D_{t}B)(x,u(x))\nabla u(x)\,dx \nonumber\\
\!\!&-&\!\!\!\!\!\!
\int_{]a,b[}\phi(x)(D_{t}B)(x,u(x))\,dD^cu(x)\nonumber\\
\!\!&-&\!\!\!\!\!\!
\sum_{x\in J_u}\!\!\phi(x)
\left[B^{*}(x,u(x_{+}))-B^{*}(x,u(x_{-}))\right]\,,\nonumber
\end{eqnarray}
where $\Omega_{u,t}=\{x\in]a,b[:\,t$ belongs to the segment of endpoints $0$ and $u(x)\}$ and $\chi^*_{\Omega_{u,t}}$ and
$B^{*}(\cdot,t)$ are, respectively, the precise representatives of the $BV$ functions
$\chi_{\Omega_{u,t}}$ and $B(\cdot,t)$.
\end{theorem}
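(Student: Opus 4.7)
The plan is to adapt the regularization strategy used in the proof of Theorem~\ref{chain}. Mollify $B$ in its first argument, $B_\eps(x,t):=\int\varphi_\eps(x-y)B(y,t)\,dy$; by assumptions (i)--(iii), the arguments of Step~1 of the proof of Theorem~\ref{chain} show that $B_\eps\in C^1(]a',b'[\times\R)$ whenever $\mathrm{supp}\,\phi\subset[a',b']\subset\subset]a,b[$ and $\eps$ is sufficiently small. The classical BV chain rule (Theorem 3.96 of \cite{AFP}) applied to $v_\eps(x):=B_\eps(x,u(x))$, followed by integration by parts against $\phi$, gives
\begin{equation*}
\int\phi' v_\eps\,dx=-I_\eps-\int\phi(D_tB_\eps)(x,u(x))\,d\widetilde Du-\sum_{x\in J_u}\phi(x)(u(x_+)-u(x_-))\int_0^1(D_tB_\eps)(x,w_s(x))\,ds,
\end{equation*}
with $w_s(x):=u(x_-)+s(u(x_+)-u(x_-))$ and $I_\eps:=\int\phi(x)(D_xB_\eps)(x,u(x))\,dx$.

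The LHS tends to $\int\phi' B(x,u(x))\,dx$ by Lebesgue dominated convergence (local boundedness of $B$). The diffuse and jump terms on the right pass to the second, third and fourth terms of (\ref{chainrules}) verbatim from Steps~2 and~3 of the proof of Theorem~\ref{chain}, specialised to $d=1$: coarea on the diffuse part, and a countable splitting combined with dominated convergence for the jump series.

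The main obstacle—and the reason the first term of (\ref{chainrules}) is expressed as an $s$-integral rather than in the explicit form appearing in Theorem~\ref{chain}—is the analysis of $I_\eps$. The key idea is to exploit the normalization $B(x,0)=0$ to write
\begin{equation*}
B_\eps(x,r)=\int_0^r(D_tB_\eps)(x,t)\,dt.
\end{equation*}
Differentiating in $x$, using the identity $\partial_x(D_tB_\eps)(\cdot,t)=(D_x(D_tB))_\eps(\cdot,t)$ (the $x$-mollification of the Radon measure $D_x(D_tB)(\cdot,t)$), and then evaluating at $r=u(x)$, one obtains
\begin{equation*}
(D_xB_\eps)(x,u(x))=\int_0^{u(x)}(D_x(D_tB))_\eps(x,t)\,dt=\int_\R\mathrm{sgn}(t)\chi_{\Omega_{u,t}}(x)(D_x(D_tB))_\eps(x,t)\,dt.
\end{equation*}
Substitution and Fubini (justified by assumption (iii), together with $u\in L^\infty$, which restricts $t$ to a bounded set) yield
\begin{equation*}
I_\eps=\int_\R\mathrm{sgn}(t)\int_{]a,b[}\phi(x)\chi_{\Omega_{u,t}}(x)(D_x(D_tB))_\eps(x,t)\,dx\,dt.
\end{equation*}

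For each fixed $t$, $\chi_{\Omega_{u,t}}\in BV(]a,b[)$ as a level-set indicator of $u$, and the absolutely continuous measures $(D_x(D_tB))_\eps(\cdot,t)\,dx$ converge weakly$^*$ to $D_x(D_tB)(\cdot,t)$. Pairing a BV function against such a convolved measure selects the precise representative in the limit, so for a.e.\ $t$,
\begin{equation*}
\int\phi\chi_{\Omega_{u,t}}(D_x(D_tB))_\eps(\cdot,t)\,dx\ \longrightarrow\ \int\phi\chi^*_{\Omega_{u,t}}\,dD_x(D_tB)(\cdot,t).
\end{equation*}
Assumption (iii) provides an $L^1(dt)$ envelope for these integrands, so dominated convergence transfers the limit through the outer integral, producing exactly the first term of (\ref{chainrules}). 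Collecting the four limits gives the identity, and $BV_{loc}$-regularity of $v$ follows from the identity since its distributional derivative is represented by a locally finite measure. The principal technical difficulty is the identification of the precise representative in the weak$^*$ limit, which requires a careful measure-theoretic matching of the jumps of $\chi_{\Omega_{u,t}}$ against the atoms of $D_x(D_tB)(\cdot,t)$.
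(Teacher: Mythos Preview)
The paper does not give an independent proof of this theorem: its entire proof is the remark that the statement is the one--dimensional case ($N=1$) of Theorem~1.1 in \cite{dcfv2}, using the representation $B(x,t)=\int_0^t b(x,s)\,ds$. So you are not reproducing the paper's argument; you are supplying a self--contained proof where the paper only cites an external reference.

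Your proposal is nonetheless correct, and it is in fact close in spirit to the original argument in \cite{dcfv2}. The mollification in $x$, the classical chain rule for $B_\eps(x,u(x))$, and the passage to the limit in the diffuse and jump terms via Steps~2--3 of the proof of Theorem~\ref{chain} all carry over under hypotheses (i)--(iii) (the only structural assumption used there is that $(D_tB)(\cdot,t)\in BV$, which is (iii)). For the term $I_\eps$, your key observation---writing $B_\eps(x,r)=\int_0^r(D_tB_\eps)(x,t)\,dt$ from $B(x,0)=0$, differentiating in $x$, and recasting the resulting $t$--integral over $[0,u(x)]$ as $\int_\R\mathrm{sgn}(t)\chi_{\Omega_{u,t}}(x)\,(\cdot)\,dt$---is exactly the mechanism that generates the first term of (\ref{chainrules}). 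The limit identification with the precise representative follows from the elementary identity $\int g\,(\mu*\varphi_\eps)\,dx=\int g_\eps\,d\mu$ together with $g_\eps\to g^*$ everywhere for $g\in BV$ and dominated convergence (the $t$--envelope being supplied by (iii) and the compactness of the range of $u$). What your approach buys, compared to a bare citation, is that the reader sees directly why the derivation--in--$x$ term here has the Fubini form rather than the explicit decomposition of Theorem~\ref{chain}: the normalization $B(\cdot,0)=0$ replaces the structural assumptions (A1), (A2), (A5).
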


\begin{proof}
It is a consequence of Theorem 1.1  in  \cite{dcfv2}, with $N=1$ and
\[
B(x,t)=\int_{0}^{t}\!b(x,s)\,ds\,.
\]
We recall that in our case for the approximate limits $u^+(x)$ and $u^-(x)$
we have $u^+(x)=\nu_u(x)u(x_+)$ and $u^-(x)=\nu_u(x)u(x_-)$, where $\nu_u=\pm 1$ is the normal at a jump point\,.
\end{proof}

\begin{remark}
When $d=1$, if we assume also all the hypotheses of Theorem \ref{chain} and we compare formulas (\ref{chainrules}) and (\ref{f:int}),  we conclude that
\begin{equation}
\begin{split}
& \int_{-\infty}^{+\infty}\!\!dt\!\!\int_{]a,b[}{\rm sgn}(t)\chi^*_{\Omega_{u,t}}(x)\phi(x)\,dD_x(D_{t}B)(\cdot,t)=
\\  &
\int_{]a,b[} \phi(x) (\nabla_x B)(x,u(x))\, dx
 +
\int_{]a,b[} \phi(x) \psi(x,u(x))\, d\lambda
\\ & +
\sum_{x\in \mathcal N}\!\!\phi(x)
\left[\frac{B(x_{+},u(x_{+}))+B(x_{+},u(x_{-}))}{2}
-\frac{B(x_{-},u(x_{+}))+B(x_{-},u(x_{-}))}{2}\right]\,.
\end{split}
\end{equation}
For piecewise constant functions this formula can be proved by using formula (\ref{asda}) and the following proposition.
\end{remark}

\begin{proposition}\label{chch}
For every piecewise constant function
$u\colon ]a,b[\to\R$ of the form
\[
u(x) = \sum_{i=0}^N v^i \Xi(x),
\]
where $v^0,\ldots, v^N\in\R$,
$a = a_0 < a_1 < \ldots < a_N < a_{N+1} = b$, we have that
$$
\int_{-\infty}^{+\infty}\!\!dt\!\!\int_{]a,b[}{\rm sgn}(t)\chi^*_{\Omega_{u,t}}(x)\phi(x)\,dD_x(D_{t}B)(\cdot,t)=\sum_{i=0}^N \int_{]a,b[} \phi(x)\Xi(x)\, d(D_x B)(\cdot, v^i)\,.
$$
\end{proposition}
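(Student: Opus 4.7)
The plan is to exploit the simple form of the piecewise constant $u$ in order to decompose $\chi^{*}_{\Omega_{u,t}}$, then apply Fubini to interchange the $x$- and $t$-integrals, and finally invoke the Fundamental Theorem of Calculus in $t$ together with the normalization $B(x,0)=0$ coming from the hypotheses of Theorem \ref{chainB}.

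For $x\in\,]a_{i},a_{i+1}[$ we have $u(x)=v^{i}$, hence $x\in\Omega_{u,t}$ if and only if $t$ belongs to the closed segment $I^{i}$ joining $0$ and $v^{i}$. Setting $K^{i}(t):={\rm sgn}(t)\chi_{I^{i}}(t)$, one obtains
\[
{\rm sgn}(t)\,\chi^{*}_{\Omega_{u,t}}(x) = \sum_{i=0}^{N} \Xi(x)\, K^{i}(t)
\]
for every $x\in\,]a,b[$ and every $t$ outside the Lebesgue-null finite set $\{0,v^{0},\dots,v^{N}\}$ (at the interior nodes $x=a_{j}$ both sides coincide, since the precise representative of each $\chi_{[a_{i},a_{i+1}]}$ takes the value $1/2$ there). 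Plugging this into the left-hand side and splitting the sum, the statement reduces to proving, for each fixed $i\in\{0,\dots,N\}$, the identity
\begin{equation}\label{f:reduce}
\int_{-\infty}^{+\infty}\!\! K^{i}(t)\,dt \int_{]a,b[}\!\! \phi(x)\Xi(x)\, dD_{x}(D_{t}B)(\cdot,t) = \int_{]a,b[}\!\! \phi(x)\Xi(x)\, dD_{x}B(\cdot,v^{i}).
\end{equation}

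To establish \eqref{f:reduce}, I would introduce the signed Radon measure $\mu^{i}$ on $]a,b[$ defined by
\[
\mu^{i}(A) := \int_{\R} K^{i}(t)\, [D_{x}(D_{t}B)(\cdot,t)](A)\, dt,\qquad A\subset\,]a,b[ \text{ Borel}.
\]
Hypothesis (iii) of Theorem \ref{chainB}, with $M$ chosen to contain $I^{i}$, guarantees that $|\mu^{i}|(]a,b[)<+\infty$, so $\mu^{i}$ integrates every bounded Borel function. Testing $\mu^{i}$ against an arbitrary $\eta\in C_{0}^{1}(]a,b[)$ and applying classical Fubini (again justified by (iii)), one gets
\[
\int_{]a,b[}\!\! \eta\, d\mu^{i} = -\int_{]a,b[}\!\! \eta'(x)\Bigl[\int_{\R} K^{i}(t)(D_{t}B)(x,t)\,dt\Bigr] dx.
\]
For every $x\in\,]a,b[$, by hypothesis (ii) the function $t\mapsto B(x,t)$ is of class $C^{1}$, and $B(x,0)=0$ by assumption; the Fundamental Theorem of Calculus then gives
\[
\int_{\R} K^{i}(t)(D_{t}B)(x,t)\,dt = \int_{0}^{v^{i}}\!(D_{t}B)(x,t)\,dt = B(x,v^{i}),
\]
hence $\int \eta\,d\mu^{i} = -\int \eta'(x)B(x,v^{i})\,dx = \int\eta\,dD_{x}B(\cdot,v^{i})$. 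Since the two Radon measures $\mu^{i}$ and $D_{x}B(\cdot,v^{i})$ agree on $C_{0}^{1}$ test functions, they coincide as measures, and in particular agree when paired with the bounded Borel function $\phi\Xi$; this proves \eqref{f:reduce}. Summing over $i$ yields the proposition.

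The main technical subtlety is the Fubini interchange together with the passage from smooth to merely bounded Borel test functions; both are controlled by the total-variation integrability hypothesis (iii) of Theorem \ref{chainB}, which ensures that $\mu^{i}$ is a finite Radon measure and therefore is determined by its action on $C_{0}^{1}$.
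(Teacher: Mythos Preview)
Your argument is correct and reaches the same conclusion as the paper, but by a genuinely different route.

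The paper does not decompose $\chi^*_{\Omega_{u,t}}$ into the sum $\sum_i \chi^*_{[a_i,a_{i+1}]} K^i(t)$. Instead it applies the Leibniz rule \eqref{f:Leib} in the $x$--variable to write
\[
\chi^*_{\Omega_{u,t}}\, dD_x(D_tB)(\cdot,t)
= dD_x\bigl(\chi_{\Omega_{u,t}} (D_tB)(\cdot,t)\bigr) - (D_tB)^*(\cdot,t)\, dD\chi_{\Omega_{u,t}},
\]
producing two terms $I_1$ and $I_2$. The term $I_1$ is handled by integration by parts, Fubini, and the identity $\int_0^{u(x)} (D_tB)(x,t)\,dt = B(x,u(x))$ (exactly your FTC step), followed by a second application of Leibniz on each interval $[a_i,a_{i+1}]$. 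The term $I_2$ requires computing $D\chi_{\Omega_{u,t}}$ explicitly as an atomic measure supported on $\{a_1,\dots,a_N\}$ and then proving the nontrivial identity $\int_{v^{i-1}}^{v^i} (D_tB)^*(a_i,t)\,dt = B^*(a_i,v^i)-B^*(a_i,v^{i-1})$ by a limiting argument. Boundary contributions from $I_1$ and $I_2$ then cancel.

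Your approach sidesteps the Leibniz decomposition and the explicit analysis of $D\chi_{\Omega_{u,t}}$: by recognizing the $t$--integral as defining a finite Radon measure $\mu^i$ and identifying it with $D_xB(\cdot,v^i)$ through $C_0^1$ testing, you pass directly to the bounded Borel test function $\phi\,\chi^*_{[a_i,a_{i+1}]}$ without ever isolating the jump contributions at the nodes $a_j$. This is shorter and conceptually cleaner; the paper's computation, on the other hand, makes visible how the boundary terms at the $a_j$ arise and cancel, which connects more transparently with the structure of formula \eqref{asda} used elsewhere in the proof of Theorem~\ref{chain}.
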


\begin{proof}
It is not restrictive to assume that $u\geq 0.$
By the Leibnitz formula (\ref{f:Leib}) we have that
\[
\begin{split}
& \int_{-\infty}^{+\infty}\!\!dt\!\!\int_{]a,b[}{\rm sgn}(t)\chi^*_{\Omega_{u,t}}(x)\phi(x)\,dD_x(D_{t}B)(\cdot,t)
\\ & =
\int_0^{+\infty} dt \int_{]a,b[} \phi(x) d D_x(\chi_{\Omega_{u,t}}(D_{t}B))(\cdot, t)
- \int_0^{+\infty} dt \int_{]a,b[} \phi(x) (D_t B)^*(x,t) d D\chi_{\Omega_{u,t}}(x)
\\ & =: I_1 + I_2\,.
\end{split}
\]
Since $B(\cdot, 0) = 0$, we have that
\begin{equation}\label{f:I1}
\begin{split}
I_1 & = - \int_0^{+\infty} dt \int_{]a,b[} \phi'(x) \chi_{\Omega_{u,t}}(x)\, (D_{t}B)(x, t)\, dx
\\ & =
- \int_{]a,b[} \phi'(x) \left(\int_0^{u(x)} (D_t B)(x,t)\, dt\right)\, dx
\\ & =
- \int_{]a,b[} \phi'(x)\, B(x,u(x))\, dx
\\ & =
- \sum_{i=0}^N \int_{]a,b[} \phi'(x) \chi_{]a_i, a_{i+1}[} (x) B(x,v^i) \, dx
\\ & =
\sum_{i=0}^N\left[
\int_{]a,b[} \phi(x) \chi^*_{[a_i, a_{i+1}]} (x)\, d(D_x B)(x, v^i)
-\phi(a_{i+1}) B^*(a_{i+1}, v^i) + \phi(a_i) B^*(a_i, v^i)
\right].
\end{split}
\end{equation}
For what concerns the second term $I_2$,
let us observe that
$D\chi_{\Omega_{u,t}}$ is an atomic measure with support
contained in $\{a_1, \ldots, a_N\}$.
Moreover
\[
D\chi_{\Omega_{u,t}}(\{a_i\}) =
\begin{cases}
1, &\textrm{if $v_{i-1} < v_{i}$ and $t\in [v_{i-1}, v_{i}[$}, \\
-1, &\textrm{if $v_{i-1} > v_{i}$ and $t\in [v_{i}, v_{i-1}[$}, \\
0, &\textrm{otherwise}\,.
\end{cases}
\]
Therefore, by Fubini's theorem we obtain
\[
I_2 = -\sum_{i=1}^N \phi(a_i) \int_{v_{i-1}}^{v_{i}} (D_t B)^*(a_i,t)\, dt\,.
\]
We claim that
\[
\int_{v_{i-1}}^{v_{i}} (D_t B)^*(a_i,t)\, dt = B^*(a_i, v_{i}) - B^*(a_i, v_{i-1})\,,
\]
so that
\begin{equation}\label{f:I2}
I_2 = -\sum_{i=1}^N \phi(a_i) [B^*(a_i, v_{i}) - B^*(a_i, v_{i-1})]\,.
\end{equation}
Namely, since
$B\left(\cdot, t\right)  \in BV(]a,b[)$, $(D_{t}B)\left(\cdot,t\right)  \in BV(]a,b[)$ and
$|(D_{t} B)(x, t)| \leq D_M$ for every $x\in ]a,b[$,
we have that
\[
\begin{split}
\int_{v_{i-1}}^{v_{i}} & (D_t B)^*(a_i,t)\, dt
=
\frac{1}{2}\int_{v_{i-1}}^{v_{i}}\left[
\lim_{x\to a_i+} (D_t B)(x, t)
+ \lim_{x\to a_i-} (D_t B)(x, t)
\right]\, dt
\\ & =
\lim_{x\to a_i+} \frac{1}{2} \int_{v_{i-1}}^{v_{i}} (D_t B)(x, t)\, dt
+ \lim_{x\to a_i-} \frac{1}{2} \int_{v_{i-1}}^{v_{i}} (D_t B)(x, t)\, dt
\\ & =
\lim_{x\to a_i+} \frac{1}{2} [B(x, v^{i}) - B(x, v^{i-1})]
+ \lim_{x\to a_i-} \frac{1}{2} [B(x, v^{i}) - B(x, v^{i-1})]
\\ & =
B^*(a_i, v_{i}) - B^*(a_i, v_{i-1})\,.
\end{split}
\]
Finally, the conclusion follows from (\ref{f:I1}) and (\ref{f:I2}).
\end{proof}

%%%%%%%%%%%%%%%%%%%%%%%%%%%%%%%%%%%%%%%%%%%%%%%%%%%%%%%%%%%%%%%%%%%%%%%%%%%%

\section{An application to conservation laws}

In this section we shall apply the chain rule formula in order to study a scalar conservation law where the flux depends discontinuously on the space variable:
\begin{equation}\label{cons}
u_t(x,t)+B(x,u(x,t))_x=0,\qquad (x,t)\in\R\times[0,+\infty)\,,
\end{equation}
where $B:\R\times\R\to\R$ is a function satisfying
the assumptions of Theorem~\ref{chain}
(with $]a,b[ = \R$).

For every $x\in \R$ we define the set of pairs $(u_-,u_+)$ satisfying the Rankine-Hugoniot condition
\[
A_x=\{(u_-,u_+)\in\R\times\R: B(x_-,u_-)=B(x_+,u_+)\}\,.
\]

We define an {\it entropy-flux pair} $(\eta,q)$ associated to \eqref{cons}, as a pair of functions $\eta,q:\R\times\R\rightarrow\R$ such that:
\begin{enumerate}
\item[(E1)] for every $x\in\R$ the function $\eta(x,\cdot)$ is convex and $\eta_u$ is locally bounded in $\R\times\R$;
moreover, for every $u\in\R$ the functions
$\eta(\cdot,u)$, $\eta_u(\cdot,u)$ belong to $BV(\R)$
and their jump set is contained in $J_K$;
%$J_{\eta_u(\cdot,u)}\subseteq J_K$;

\item[(E2)] 
$q(x,\cdot)\in Lip_{\rm loc}(\R)$ for every $x\in\R$,
and $q(\cdot,u)\in BV(\R)$, $J_{q(\cdot,u)}\subseteq J_K$ for every $u\in\R$;

\item[(E3)] $\eta_u(x,u)B_u(x,u)=q_u(x,u)$ for every $x\in\R\setminus J_K$ and $u\in \R$;

\item[(E4)] $q(x_+,u_+)-q(x_-,u_-)\leq 0$ for every $x\in \R$ and every $(u_-,u_+)\in A_x$.
\end{enumerate}

\begin{proposition}\label{bcbc}
Let $u$ be a bounded piecewise $C^1$  solution of \eqref{cons} with 
$J_{u(\cdot,t)}\subseteq J_K$ for every $t\in ]0,T[$,
and let $(\eta,q)$ be an entropy-entropy flux pair  associated to \eqref{cons}.
Then $u$ satisfies the following inequality
\begin{equation}\label{consmeas}
(\eta(x,u))_t+(q(x,u))_x\leq 0
\end{equation}
in the sense of measures, i.e.
\begin{equation}\label{constest}
\int\!\!\!\!\int_{\R\times ]0,T[}\phi(x,t)\,d\,[(\eta(x,u(x,t)))_t+(q(x,u(x,t)))_x]
\leq 0
\end{equation}
for every function $\phi:\R\times ]0,T[\rightarrow[0,\infty[$ continuous with compact support.
\end{proposition}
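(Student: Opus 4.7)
The plan is to interpret the claimed distributional inequality via the chain rule of Corollary~\ref{chain222}, splitting the resulting measure into diffuse and atomic pieces. For any non-negative $\phi\in C_c(\R\times(0,T))$, the inequality (\ref{constest}) is, by definition of the distributional derivatives, equivalent to $-\iint(\eta(x,u)\phi_t + q(x,u)\phi_x)\,dx\,dt \le 0$. The piecewise $C^1$ hypothesis together with $J_{u(\cdot,t)}\subseteq J_K$ forces the singularities of $u$ in the strip $\R\times(0,T)$ to lie only along the vertical lines $\{x_j\}\times(0,T)$ with $x_j\in J_K$; for each $x\notin J_K$ the slice $t\mapsto u(x,t)$ is then $C^1$, so integration by parts in $t$ yields $-\iint \phi_t\eta\,dx\,dt = \iint \phi\,\eta_u(x,u)\,u_t\,dx\,dt$. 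For each fixed $t$, on the other hand, $x\mapsto q(x,u(x,t))$ is in $BV$ with jump set contained in $J_K$, so Corollary~\ref{chain222} (applied with $g\equiv 1$) decomposes $-\int\phi_x q\,dx$ into an absolutely continuous contribution $\int\phi(\nabla_x q + q_u u_x)\,dx$, a Cantor contribution $\int\phi\,\psi_q(x,u)\,d\lambda$, and an atomic contribution $\sum_{x\in J_K}\phi(x,t)[q(x_+, u(x_+,t)) - q(x_-, u(x_-,t))]$.

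Next I would exploit the conservation law. Applying the same chain rule to $(B(x,u))_x$, and using that $u_t$ has no $x$-singular part (since $u$ is piecewise $C^1$, with only stationary jumps), the distributional equation $u_t + (B(x,u))_x = 0$ decomposes into three pieces: (a) the pointwise identity $u_t + \nabla_x B(x,u) + B_u(x,u)u_x = 0$ on the smooth part; (b) the vanishing of the Cantor contribution, $\psi_B(x,u(x,t))=0$ for $\lambda\otimes dt$-a.e.\ $(x,t)$; and (c) the Rankine--Hugoniot identity $B(x_+, u(x_+,t)) = B(x_-, u(x_-,t))$ at every $x\in J_K$ and every $t\in(0,T)$, i.e.\ $(u(x_-,t), u(x_+,t))\in A_x$. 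Multiplying (a) by $\eta_u(x,u)$ and invoking (E3) in the form $\eta_u B_u = q_u$ gives $\eta_u u_t + q_u u_x = -\eta_u \nabla_x B$, which exactly cancels the $q_u u_x$ term coming from the $q$-decomposition.

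Assembling the pieces leads to the identity
\[
\iint\phi\,d[(\eta(x,u))_t+(q(x,u))_x] = \iint\phi(\nabla_x q - \eta_u\nabla_x B)\,dx\,dt + \iint\phi(\psi_q - \eta_u\psi_B)\,d\lambda\,dt + \int_0^T\!\sum_{x\in J_K}\!\phi(x,t)\,[q(x_+,u(x_+,t))-q(x_-,u(x_-,t))]\,dt.
\]
The atomic sum is $\le 0$ by (E4), applied to the admissible pair $(u(x_-,t), u(x_+,t))\in A_x$ produced by (c). The main obstacle will be to verify that the diffuse (absolutely continuous plus Cantor) part on the right-hand side vanishes: while (E3) takes care of the $u$-partial derivatives, the required cancellations $\nabla_x q = \eta_u \nabla_x B$ and its Cantor analog $\psi_q = \eta_u \psi_B$ at fixed $u$ are not an immediate consequence of (E1)--(E4) and stem instead from the compatibility encoded in the construction of the entropy pair. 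For the canonical Kruzkov--Carrillo pairs $\eta(x,u)=|u-c(x)|$ with $c(\cdot)$ chosen so that $B(x,c(x))$ is locally constant, a direct computation (using $\nabla_x B(x,c(x))+B_u(x,c(x))c'(x)=0$) confirms both identities pointwise; in the general setting one must either supplement (E3) with this $x$-compatibility or approximate a general admissible pair by Kruzkov--Carrillo entropies, for which the conclusion passes to the limit by the dominated convergence arguments already used in the proof of Theorem~\ref{chain}.
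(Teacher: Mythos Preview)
Your approach coincides with the paper's: both expand $(\eta(x,u))_t$ via the classical chain rule in $t$, use the conservation law to replace $u_t$ by $-(B(x,u))_x$, and then compare $\eta_u^*\,(B(x,u))_x$ with $(q(x,u))_x$ by means of the $BV$ chain rule, disposing of the jump part through Rankine--Hugoniot plus (E4) and of the $\widetilde{D}u$-part through (E3). The only cosmetic difference is that the paper invokes Corollary~\ref{chain222} with $g=\eta_u(x,u)$ to compute $\int\phi\,\eta_u^*\,d(B(x,u))_x$ in one stroke, rather than first expanding $(B(x,u))_x$ and multiplying pointwise; the outcome is the same.

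The gap you single out --- that (E1)--(E4) do not force the identities $\nabla_x q=\eta_u\nabla_x B$ and $\psi_q=\eta_u\psi_B$ needed for the diffuse $x$-derivative contributions to cancel --- is genuine, and the paper's own proof is silent on precisely this point: when it applies Corollary~\ref{chain222} to $\eta_u^*\,(B(x,u))_x$ and to $(q(x,u))_x$ it simply omits the $\nabla_x B$, $\psi_B$, $\nabla_x q$, $\psi_q$ terms, as if (E3) alone sufficed. So your diagnosis is accurate; the paper does not resolve this issue and implicitly relies on an $x$-compatibility of the entropy pair that is not among the listed axioms. Your suggested remedies (either add this compatibility to the definition of entropy--flux pair, or reduce to the adapted Kruzkov pairs, for which it holds by direct computation) are the natural ones.
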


\begin{proof}
We remark that, by the chain rule formula and since $\eta$ does not depend on $t$, we have
\[
(\eta(x,u(x,t)))_t=\eta^*_u(x,u)u_t(x,t)dt
\]
in the sense of measure. Then
\[
\iint_{\R\times ]0,T[}\phi(x,t)\,d\,(\eta(x,u))_t=
\iint_{\R\times ]0,T[}\phi(x,t)\,\eta^*_u(x,u)u_t(x,t)dt\,,
\]
so that
\begin{align*}
& \iint_{\R\times ]0,T[}\phi(x,t)\,d\,(\eta(x,u))_t+(q(x,u))_x 
\\ & =
\iint_{\R\times]0,T[}\phi(x,t)\,(\eta^*_u(x,u))u_t(x,t)dt+
\iint_{\R\times]0,T[}\phi(x,t)\,d\,(q(x,u))_x\,,
\end{align*}
where $(\eta_u(x,u))^*$ is the precise representative of the composition of $\eta_u(x,\cdot)$ with the function $u$\,.
By (\ref{cons}) we have that
$u_t(x,t)dt=-(B(x,u))_x$
in the sense of measures, i.e.
\[
\iint_{\R\times]0,T[}\phi(x,t)u_t(x,t)dt=-
\iint_{\R\times]0,T[}\phi(x,t)\,d\,(B(x,u))_x\,.
\]
Since the jumps of $\eta_u(\cdot,u))$ are contained in $J_K$, reasoning as in the proof of Corollary \ref{chain222}, we have that
$(\eta_u(x,u))^*u_t(x,t)dt=-(\eta_u(x,u))^*(B(x,u))_x$
in the sense of measures,
i.e.
\[
\iint_{\R\times ]0,T[}\phi(x,t)\,(\eta_u(x,u))^*u_t(x,t)dt=-
\iint_{\R\times]0,T[}\phi(x,t)\,(\eta_u(x,u))^*\,d\,(B(x,u))_x\,.
\]
Hence
\begin{align*}
& \iint_{\R\times ]0,T[}\phi(x,t)\,d\,[(\eta(x,u))_t+(q(x,u))_x]
\\ & = 
-\iint_{\R\times ]0,T[}\phi(x,t)\,(\eta_u(x,u))^*\,d\,(B(x,u))_x+
\iint_{\R\times ]0,T[}\phi(x,t)\,d\,(q(x,u))_x\,,
\end{align*}
so that it is enough to prove that
$(\eta_u(x,u))^* B(x,u)_x\geq(q(x,u))_x$
in the sense of measures, i.e.\ for every nonnegative function
$\phi\in C_c(\R\times ]0,T[)$
\[
\int_0^T\left[\int_\R\phi(x,t)(\eta_u(x,u))^*\,d\,B(x,u)_x\right]dt\geq
\int_0^T\left[\int_\R\phi(x,t)\,d\,q(x,u)_x\right]dt\,.
\]
We use the chain rule formula (see Corollary \ref{chain222}) and condition (E3)
to obtain
\begin{align*}
I  := {} & \int_0^T\left[\int_\R\phi(x,t)(\eta_u(x,u))^*\,d\,B(x,u)_x\right]dt
\\ = {} & 
\int_0^T\left[\int_\R\phi(x,t)\eta_u(x,u)B_u(x,u(x))\,\cdot d\widetilde Du(x)\right]dt
\\ & +\int_0^T\left[\sum_{x\in J_K}\!\!\phi(x,t)
(\eta_u(x,u))^*\Big(B(x_+,u(x_{+}))-B(x_-,u(x_{-}))\Big)
\right]dt\,.
\end{align*}
We remark that the last term vanishes by the Rankine-Hugoniot condition. 
Using (E4) we obtain
\begin{align*}
I \geq {} & \int_0^T\left[\int_\R\phi(x,t)q_u(x,u(x))\,\cdot d\widetilde Du(x)\right]dt
\\ & + 
\int_0^T\Big[\sum_{x\in J_u}\!\!\phi(x,t)
\Big(q(x_+,u(x_+))-q(x_-,u(x_-))\Big)
\Big]dt
\\ = {} &
\int_0^T\left[\int_\R\phi(x,t)\,d\,q(x,u)_x\right]dt\,.
\end{align*}
This concludes the proof.
\end{proof}

We consider the partially adapted Kruzkov entropies introduced 
by Audusse and Perthame for discontinuous flux (see formula (1.3) in \cite{AP}).

In addition to the assumptions on the function $B$ stated in 
Theorem~\ref{chain}, we also assume that
\begin{equation}\label{f:v}
\text{
for every $x\in\R$, the map $B(x,\cdot)$ is a one to one function from $\R$ to $\R$\,.}
\end{equation}

Given $\alpha\in\R$, by assumption (\ref{f:v}) there exists a unique function 
$c_\alpha\colon D_{\alpha}\to\R$, defined on a (possibly empty) set 
$D_{\alpha}\subset\R$, such that
$B(x,c_\alpha(x))=\alpha$ for every $x\in D_{\alpha}$.

\def\etaa{\eta^{(\alpha)}}
\def\qa{q^{(\alpha)}}

\begin{proposition}
For  every $\alpha\in\R$ such that $c_{\alpha}$ is defined in $\R$,
let us define the adapted Kruzkov entropy
\[
\etaa(x,u) :=|u-c_\alpha(x)|
\]
and the corresponding flux
\[
\qa(x,u) :=\big(B(x,u)-\alpha\big)({\rm sgn}(u-c_\alpha(x)))^*\,.
\]
We assume that for every $x\in J_K$ and for every $(u_-,u_+)\in A_x$ we have
\begin{equation}\label{entr}
(\text{sgn}(u_--c_\alpha(x_-)))^*=
(\text{sgn}(u_+-c_\alpha(x_+)))^*\,.
\end{equation}
Then we have
\begin{itemize}
\item(a) $(\etaa,\qa)$ is an entropy-flux pair;
in particular, the entropy inequality
\begin{equation}\label{adapt}
\partial_t|u-c_\alpha(x)|+\partial_x[\big(B(x,u)-\alpha\big)(\text{sgn}(u-c_\alpha(x)))^*]\leq 0
\end{equation}
holds in the sense of distributions;
\item(b) \eqref{consmeas} holds for every entropy-flux pair $(\eta, q)$
if and only if  \eqref{adapt} holds for every $\alpha$ as above.
\end{itemize}
\end{proposition}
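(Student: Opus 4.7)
The plan is to prove (a) by verifying conditions (E1)--(E4) for the pair $(\etaa,\qa)$ and then invoking Proposition \ref{bcbc}, while the equivalence in (b) will follow from the classical Kruzkov representation of general convex entropies as superpositions of the adapted ones.

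For (a), condition (E1) is straightforward: convexity of $u\mapsto |u-c_\alpha(x)|$ is clear and the derivative is bounded by $1$; the BV regularity in $x$ of $\etaa(\cdot,u)$ and of $\etaa_u(\cdot,u)=(\text{sgn}(u-c_\alpha(\cdot)))^*$ reduces to that of $c_\alpha$, which I would establish from (\ref{f:v}) and the BV regularity of $B(\cdot, w)$ through the defining identity $B(x,c_\alpha(x))=\alpha$, yielding in particular $J_{c_\alpha}\subseteq J_K$. For (E2), the function $u\mapsto \qa(x,u)$ is locally Lipschitz because the factor $(\text{sgn}(u-c_\alpha(x)))^*$ can only be discontinuous at $u=c_\alpha(x)$, where $B(x,u)-\alpha$ vanishes and so kills the jump. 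Condition (E3) is a direct computation for $u\neq c_\alpha(x)$. The key point is (E4): the Rankine--Hugoniot identity $B(x_-,u_-)=B(x_+,u_+)$ allows one to factor
\[
\qa(x_+,u_+)-\qa(x_-,u_-)=(B(x_\pm,u_\pm)-\alpha)\big[(\text{sgn}(u_+-c_\alpha(x_+)))^*-(\text{sgn}(u_--c_\alpha(x_-)))^*\big],
\]
which vanishes by the assumption (\ref{entr}). Inequality (\ref{adapt}) is then immediate from Proposition \ref{bcbc} applied to $(\etaa,\qa)$.

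For (b) the implication ``general entropies $\Rightarrow$ adapted Kruzkov'' is trivial: apply (\ref{consmeas}) to the pair $(\etaa,\qa)$ built in (a). For the converse, my strategy is to write a general entropy as a positive superposition of adapted ones. Since $\eta(x,\cdot)$ is convex, its second derivative $\eta_{uu}(x,\cdot)$ is a nonnegative Borel measure on $\R$; the change of variable $c=c_\alpha(x)$, i.e.\ $\alpha=B(x,c)$ (a homeomorphism of $\R$ onto $\R$ by (\ref{f:v}) and the $C^1$ regularity), transports it to a nonnegative Borel measure $\nu_x$. Up to an affine-in-$u$ remainder with BV coefficients one obtains
\[
\eta(x,u)=\eta_0(x)+\eta_1(x)u+\int_\R \big(|u-c_\alpha(x)|-|c_\alpha(x)|-u\,\text{sgn}(-c_\alpha(x))\big)\,d\nu_x(\alpha),
\]
and, thanks to (E3), a companion representation for $q(x,u)$ in terms of $\qa$. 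The affine part corresponds to an entropy-flux pair whose dissipation identically vanishes (it reproduces the conservation law itself), while the integral part turns the adapted inequalities (\ref{adapt}) into (\ref{consmeas}) after integration against $d\nu_x(\alpha)$ and a Fubini argument.

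The main obstacle is making this integral representation rigorous in the present BV setting: one needs joint measurability of $(x,\alpha)\mapsto c_\alpha(x)$, BV dependence on $x$ uniform enough for $\alpha$ in compact sets, and a justification for the commutation of the distributional derivatives in (\ref{consmeas}) with the $\alpha$-integration against $\nu_x$. Hypothesis (\ref{entr}) is essential precisely at this stage, since it guarantees that the jump contributions of the adapted entropies can be summed coherently across $J_K$ to reproduce the jump contribution of a general entropy without uncontrolled sign cancellations in $\alpha$.
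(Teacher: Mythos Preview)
Your treatment of part (a) coincides with the paper's: a direct check of (E3), and for (E4) the Rankine--Hugoniot relation $B(x_-,u_-)=B(x_+,u_+)$ combined with assumption (\ref{entr}) forces $\qa(x_+,u_+)-\qa(x_-,u_-)=0$, after which Proposition~\ref{bcbc} gives (\ref{adapt}).

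For part (b) your route is genuinely different. The paper does \emph{not} use a continuous superposition against an $x$-dependent measure $\nu_x$; it uses a \emph{discrete} piecewise-affine interpolant. One fixes finitely many levels $\alpha^N_i=iC/N$ (independent of $x$), sets $c^N_i(x)=c_{\alpha^N_i}(x)$, and writes
\[
\eta^N(x,u)=a^N(x)+b^N(x)\,u+\sum_i b^N_i(x)\,\eta^{(\alpha^N_i)}(x,u),
\qquad b^N_i(x)\geq 0,
\]
with the companion flux $q^N$. The affine part contributes zero because $u$ solves (\ref{cons}); each remaining summand is a non-negative Borel function of $x$ times the non-positive measure coming from (\ref{adapt}), hence is itself non-positive; finally $\eta^N\to\eta$, $q^N\to q$ pointwise with uniform bounds, and dominated convergence yields (\ref{constest}).

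The obstacle you flag in your approach is a real gap, not a detail. Because your integrator $\nu_x$ depends on $x$, the step ``integrate (\ref{adapt}) against $d\nu_x(\alpha)$ and use Fubini'' is not a well-defined operation on the family of signed measures $\alpha\mapsto(\etaa(x,u))_t+(\qa(x,u))_x$: you would first have to produce a fixed reference measure in $\alpha$ and a non-negative density $\rho(x,\alpha)$ with enough joint regularity to reduce to the elementary fact that multiplying a non-positive Radon measure by a non-negative Borel function preserves the sign. Nothing in (E1)--(E4) supplies that density in general. The paper's discretisation is precisely the device that sidesteps this: the $\alpha$-grid is fixed, the $x$-dependence lives only in the finitely many non-negative coefficients $b^N_i(x)$, and no integration against an $x$-dependent measure is ever required. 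If you want to keep the spirit of your argument, the natural repair is to replace the integral by its Riemann sums --- which brings you back to the paper's construction.
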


\begin{proof}
For every $x\in\R  $ and $u\neq c_{\alpha}(x)$ one has
$\qa_u(x,u)=(\text{sgn}(u-c_\alpha(x))^* B_u(x,u)$.
Then, since $({\etaa_u}(x,u))^*=(\text{sgn}(u-c_\alpha(x)))^*$, we obtain
$\etaa_u(x,u)B_u(x,u)=\qa_u(x,u)$.
Moreover, for every $x\in  \R$ and every $(u_-,u_+)\in A_x$ satisfying (\ref{entr})
we have that
\[
\begin{split}
\qa(x_+,u_+)-\qa(x_-,u_-)
& = 
(\text{sgn}(u_+-c_\alpha(x)))^*\big(B(x_+,u_+)-\alpha\big)
\\ & -
(\text{sgn}(u_--c_\alpha(x)))^*\big(B(x_-,u_-)-\alpha\big)=0\,.
\end{split}
\]

In order to prove (b), let $u$ be a bounded $BV$ solution to (\ref{cons}).
If $u$ satisfies (\ref{consmeas}) for every entropy-flux pair $(\eta, q)$,
then from (a) it satisfies also (\ref{adapt}) for every $\alpha$.

Conversely, assume now that $u$ satisfies also (\ref{adapt}) for every $\alpha$.
Let $(\eta,q)$ be an entropy-flux pair, and
let $\phi(x,t)$ be a non-negative test function.
We have to prove that (\ref{constest}) holds.

Assume that
$|u(x,t)|\leq M$ for every $(x,t)$, $\text{supp} \phi\subset ]a,b[\times ]0,T[$,
and $|B(x, u)|\leq C$ for every $(x,u)\in ]a,b[\times ]-M,M[$.
Let us fix a positive integer number $N$, and for every $x\in [a,b]$ define
\begin{gather*}
I(x) := \{i\in \Z:\ |i| \leq N,\ \alpha^N_i := iC/N \in \text{Range} B(x, \cdot)\}, \\
m(x) := \min I(x),\ n(x) := \max I(x),\\
c^N_i(x) := c_{\alpha^N_i}(x), \ i = m(x), \ldots, n(x).
\end{gather*}
We are going to approximate $\eta$ (and so $q$) by an entropy $\eta^N$ of the
form
\begin{equation}\label{f:etaN}
\begin{split}
\eta^N(x,u) & := a^N(x) + b^N(x) u +
\sum_{i=m(x)+1}^{n(x)-1} b^N_i(x) |u-c^N_i(x)|
\\ & =
a^N(x) + b^N(x) u +
\sum_{i=m(x)+1}^{n(x)-1} b^N_i(x) \eta^{(\alpha^N_i)}(x,u),
\end{split}
\end{equation}
where $b^N_i(x) \geq 0$ for every $i$ and $x$.
Indeed, if we define
\[
\delta^N_i(x) := \frac{\eta(x, c^N_{i+1}(x)) - \eta(x, c^N_i(x))}{c^N_{i+1}(x)- c^N_i(x)}\,,
\quad
x\in \R,\ i=m(x),\ldots,n(x)-1,
\]
and
\begin{gather*}
b^N(x) := \frac{\delta^N_{m(x)}(x) + \delta^N_{n(x)-1}(x)}{2},\\
b^N_i(x) := \frac{\delta^N_{i}(x) - \delta^N_{i-1}(x)}{2}\,,
\ i = m(x)+1,\ldots, n(x)-1,\\
a^N(x) := \eta(x, c^N_{m(x)}(x))-b^N(x) c^N_{m(x)}(x)
- \sum_{i=m(x)+1}^{n(x)-1} b^N_i(x) [c^N_i(x) - c^N_{m(x)}(x)],
\end{gather*}
then $b^N_i(x)\geq 0$ and $\eta^N(x, \cdot)$ is a convex piecewise affine function
coinciding with $\eta(x,\cdot)$ in the points $u_i = c^N_i(x)$, $i=m(x), \ldots, n(x)$.

The flux associated to $\eta^N$ is the function
\begin{equation}\label{f:qN}
q^N(x,u) := b^N(x) B(x,u) +
\sum_{i=m(x)+1}^{n(x)-1} b^N_i(x) q^{(\alpha^N_i)}(x,u)\,.
\end{equation}

For every $N\in\N$ we have that
\begin{equation}\label{dddn}
\begin{split}
& \iint_{\R\times[0,T]}\phi(x,t) [ (\eta^N(x,u(x.t)))_t+ (q^N(x,u(x,t)))_x]
\, dx\;dt
\\ & =
\iint_{\R\times[0,T]} \phi(x,t)\, b^N(x) [u_t(x,t) + (B(x, u(x,t)))_x]\, dx\;dt
\\ & + 
\sum_{i=m(x)+1}^{n(x)-1} \iint_{\R\times[0,T]}  \phi(x,t)\, 
b^N_i(x) [(\eta^{(\alpha^N_i)}(x, u(x,t)))_t + 
(q^{(\alpha^N_i)}(x, u(x,t)))_x] \, dx\;dt \,,
\end{split}
\end{equation}
where we recall that
$$
\eta^{(\alpha^N_i)}(x,u)=|u-c_{\alpha^N_i}(x)|
$$
and
\[
q^{(\alpha^N_i)}(x,u) :=\big(B(x,u)-c_{\alpha^N_i }(x)\big)({\rm sgn}(u-c_{\alpha^N_i}(x))^*\,.
\]

We recall that, given a non-negative measure $\mu$ (i.e. $\mu(\phi)\geq 0$ for every test function $\phi\geq 0$), if we define a measure
$$
\mu_b(\phi):=\iint_{\R\times[0,T]}\phi(x,t) b(x,t)\,d\mu(x,t)\,,
$$
where $b$ is a non-negative Borel function, then $\mu_b$ is also a non-negative measure 
(see \cite[Ch.~7]{Fol})\,.
Hence from (\ref{cons}), (\ref{adapt}) and (\ref{dddn})
and the fact that the functions $b^N_i$ are non-negative, we have
\[
 \iint_{\R\times[0,T]}\phi(x,t) [ (\eta^N(x,u(x.t)))_t+ (q^N(x,u(x,t)))_x]
\, dx\;dt
 \leq 0\,.
\]
The sequences of functions $(\eta^N)_N$, $(q^N)_N$ are uniformly bounded on $[a,b]\times [-M,M]$,
and converge pointwise to $\eta$ and $q$ respectively, hence we conclude that
(\ref{constest}) holds.
\end{proof}

%%%%%%%%%%%%%%%%%%%%%%%%%%%%%%%%%%%%%%%%%%%%%%%%%%%%%%%%%%%%%%%%%%%%%%%%%%%%%%%%%%%%
%\bibliographystyle{siam}
%\bibliography{/Ga/BibTeX/Graziano,/Ga/BibTeX/Ricerca,/Ga/BibTeX/Physics}
%\bibliography{/home/graziano/Ga/BibTeX/Graziano,/home/graziano/Ga/BibTeX/Ricerca,/home/graziano/Ga/BibTeX/Physics}
%\end{document}
%%%%%%%%%%%%%%%%%%%%%%%%%%%%%%%%%%%%%%%%%%%%%%%%%%%%%%%%%%%%%%%%%%%%%%%%%%%%%%%%%%%%

%%%%%%%%%%%%%%%%%%%%%%%%%%%%%%%%%%%%%%%%%%%%%%%%%%%%%%%%%%%%%%%%%%%%%%%%%%%%

\def\cprime{$'$}

\end{document}